\documentclass{article}
\usepackage{amsfonts}
\usepackage{latexsym}
\usepackage{mathrsfs}
\usepackage{amssymb}
\usepackage{amsmath}
\usepackage{amsthm}%%%%%%%%标出证明结束符号
\usepackage{indentfirst}%%%%%%%%%%%%%章节的第一段进行缩进
\usepackage{color}
\usepackage{graphicx}

\allowdisplaybreaks%%%%%%%%%允许大公式断行
\hoffset-1.2cm \voffset-1cm

%%%%%%%%%%%%%%%%%%%%%%%%%%%%%%%%%%%%%%%%%%%%%%%%%%%%%%%%%%%%%%%%%%%%%%%%

\newtheorem{theorem}{\color{black}\indent Theorem}[section]
\newtheorem{lemma}{\color{black}\indent Lemma}[section]
\newtheorem{proposition}{\color{black}\indent Proposition}[section]
\newtheorem{definition}{\color{black}\indent Definition}[section]
\newtheorem{remark}{\color{black}\indent Remark}[section]
\newtheorem{corollary}{\color{black}\indent Corollary}[section]

%%%%%%%%%%%%%%%%%%%%%%%%%%%%%%%%%%%%%%%%%%%%%%%%%%%%%%%%%%%%%%%%%%%%%%%%%
%%%%%%%%%%  instructions for preparation of the page: you can modify:
\textheight210mm \textwidth145mm \hoffset-1.2cm \voffset-1cm

\begin{document}
\title{\LARGE\bf Extinction of solutions to a class of fast diffusion systems with nonlinear sources}
\author{Yuzhu Han$^\dag$ \qquad Wenjie Gao}
 \date{}
 \maketitle

 \footnotetext{\hspace{-1.9mm}$^\dag$Corresponding author.\\
 Email addresses: yzhan@jlu.edu.cn(Y. Han),  wjgao@jlu.edu.cn(W. Gao).

\thanks{
 $^*$The project is supported by NSFC (11271154), by Key Lab of Symbolic
 Computation and Knowledge Engineering of Ministry of Education and
 by the 985 program of Jilin University.}}

\begin{center}
{\it\small School of Mathematics, Jilin University,
 Changchun 130012, P.R. China}
\end{center}

\date{}
\maketitle

{\bf Abstract}\ In this paper, the finite time extinction of solutions to the fast diffusion system
$u_t=\mathrm{div}(|\nabla u|^{p-2}\nabla u)+v^m$, $v_t=\mathrm{div}(|\nabla v|^{q-2}\nabla v)+u^n$
is investigated, where $1<p,q<2$, $m,n>0$ and $\Omega\subset \mathbb{R}^N\ (N\geq1)$ is a bounded smooth domain.
After establishing the local existence of weak solutions, the authors show that if $mn>(p-1)(q-1)$, then any solution
vanishes in finite time provided that the initial data are ``comparable";
if $mn=(p-1)(q-1)$ and $\Omega$ is suitably small, then the existence of extinction solutions
for small initial data is proved by using the De Giorgi iteration process and comparison method.
On the other hand, for $1<p=q<2$ and $mn<(p-1)^2$, the existence of at least one non-extinction solution for
any positive smooth initial data is proved.

{\bf Keywords} Fast diffusion system; Nonlinear source; Extinction in finite time.

{\bf 2010 MSC} 35K40, 35K51.

\section{Introduction}
\setcounter{equation}{0}

This paper is concerned with the extinction properties of solutions to the following fast diffusion
parabolic system
\begin{equation}\label{1.1}
\begin{cases}
u_t=\mathrm{div}(|\nabla u|^{p-2}\nabla u)+v^m,&\ x\in\Omega,\ t>0,\\
v_t=\mathrm{div}(|\nabla v|^{q-2}\nabla v)+u^n,&\ x\in\Omega,\ t>0,\\
u(x,t)=v(x,t)=0,&x\in\partial\Omega,\ t>0,\\
u(x,0)=u_0(x),\ v(x,0)=v_0(x),&x\in\Omega,
\end{cases}
\end{equation}
where $1<p,\ q<2$, $m,\ n>0$, $\Omega$ is a bounded domain in
$\mathbb{R}^N$ ($N\geq1$) with smooth boundary $\partial\Omega$ and the initial data $u_0\in L^\infty({\Omega})\cap W^{1,p}_0(\Omega)$,
$v_0\in L^\infty({\Omega})\cap W^{1,q}_0(\Omega)$.

Problem (\ref{1.1}) appears, for example, in the theory of non-Newtonian filtration fluids \cite{Dibenedetto93,Wu01}.
From a physical point of view, we need only to consider the nonnegative solutions.
Moreover, if we assume that $u_0(x)$ and $v_0(x)$ are nonnegative, then we can deduce, by the weak maximum
principle, that $u$ and $v$ are nonnegative as long as they exist. Therefore, we always assume that the initial data
are nonnegative nontrivial functions and consider only the nonnegative solutions throughout this paper.

In this paper, we are interested in the extinction in finite time of solutions to (\ref{1.1}).
We say that a solution $(u,v)$ has a finite extinction time $T$ if $T>0$ is the smallest
number such that both $u(x,t)=0$ and
$v(x,t)=0$ for a.e. $(x,t)\in\Omega\times(T,\infty)$.

Finite time extinction is one of the most important properties of solutions to many evolutionary equations
that has been investigated by many authors during the
past several decades. It is E. Sabinina who first observed extinction via fast
diffusion \cite{Sabinina62}, and from
then on, there has been increasing interest in this direction. For example, in his
fundamental survey \cite{Kalashnikov74},  A. S. Kalashnikov investigated finite time
extinction as well as localization and finite propagation properties of solutions
to the following semilinear heat equation with homogeneous Dirichlet
boundary condition
\begin{equation}\label{1.2}
\begin{cases}
u_t=\Delta u-u^q,&x\in\Omega,\ t>0,\\
u(x,t)=0,&x\in\partial\Omega,\ t>0,\\
u(x,0)=u_0(x),&x\in\Omega
\end{cases}
\end{equation}
in the 1970s. A more complete extinction conclusion of Problem (\ref{1.2}) was given in
\cite{Gu94}: A nontrivial solution of (\ref{1.2}) vanishes in finite
time if and only if $0<q<1$, which means that strong absorption will
cause extinction to occur in finite time. In \cite{Gu94}, Gu also gave a simple statement of the necessary and
sufficient conditions of extinction of the solution to the following
problem
\begin{equation}\label{1.3}
\begin{cases}
u_t=\mathrm{div}(|\nabla u|^{p-2}\nabla u)+au^q,&x\in\Omega,\ t>0,\\
u(x,t)=0,&x\in\partial\Omega,\ t>0,\\
u(x,0)=u_0(x),&x\in\Omega,
\end{cases}
\end{equation}
with $a<0,q>0$. He proved that if $p\in(1,2)$ or $q\in(0,1)$ the
solutions of the problem vanish in finite time, but if $p\geq2$ and
$q\geq1$, there is no extinction. In the absence of absorption (i.e.
$a=0$), Dibenedetto \cite{Dibenedetto93} and Yuan et al. \cite{Yuan05}
proved that the necessary and sufficient conditions for the
extinction to occur is $p\in(1,2)$.

Later in \cite{Yin07}, Yin and Jin studied Problem (\ref{1.3})
with $1<p<2$, $a,q>0$ and dimension $N>2$. They proved that if $q>p-1$, then any
bounded and non-negative weak solution of Problem (\ref{1.3})
vanishes in finite time for appropriately small initial data $u_0$,
while Problem (\ref{1.3}) admits at least one bounded non-negative
and non-extinction weak solution for the case of $0<q<p-1$. As for
the critical case $q=p-1$, whether the solutions vanish in finite
time or not depends on the comparison between $a$ and $\lambda_1$,
where $\lambda_1>0$ is the first eigenvalue of $p$-Laplace operator in $\Omega$ with homogeneous Dirichlet
boundary conditions. Extinction and non-extinction results similar to the ones in
\cite{Yin07} were also obtained by Tian and Mu in \cite{Tian08}, and
some sufficient conditions in \cite{Yin07} for the solutions of
(\ref{1.3}) to vanish in finite time were weakened by Liu and Wu
(see \cite{Liu08}). There are some other extinction results of the solutions of
degenerate or singular parabolic problems with or without absorption
(reaction) terms, readers may refer to
\cite{Berryman80,Ferreira01,Friedman87,Friedman80,Galaktionov00,Galaktionov91,Galaktionov94a,Galaktionov94b,Han11,Han13,Jin09,Yin09}
and references therein.

Generally speaking, for Problems (\ref{1.2}) and (\ref{1.3}) with $a<0$, there
is a cooperation between the diffusion term and the absorption term,
and fast diffusion or strong absorption might cause any bounded
nonnegative solution to vanish in finite time. However, in
(\ref{1.3}) with $a>0$, the nonlinear term is  physically called the ``hot
source", while in (\ref{1.2}) and (\ref{1.3}) with $a<0$ the nonlinear term
is usually called the ``cool source". Results in
\cite{Li05,Tian08,Yin07} imply that when the diffusion is fast enough,
the solutions might still vanish in finite time for small initial
data in spite of the ``hot sources".

However, compared with the huge amount of extinction results concerning scalar problems,
there is only quite little literature dealing with
extinction quality of solutions to evolutionary systems until now.
In \cite{Friedman92}, Friedman et al. investigated the extinction
and positivity for the following system of semilinear parabolic
variational inequalities
\begin{equation}\label{1.4}
\begin{cases}
u_t-u_{xx}+v^p\geq0,\ \ \ \ v_t-v_{xx}+u^q\geq0,&(x,t)\in (-1,1)\times(0,\infty),\\
u(u_t-u_{xx}+v^p)=0,\ v(v_t-v_{xx}+u^q)=0,&(x,t)\in (-1,1)\times(0,\infty),\\
u\geq0,\ \ v\geq0, &(x,t)\in (-1,1)\times(0,\infty),\\
u(\pm1,t)=v(\pm1,t)=0,&t\in(0,\infty),\\
u(x,0)=u_0(x),\ v(x,0)=v_0(x), &x\in [-1,1].
\end{cases}
\end{equation}
It was shown that when $u_0$ and $v_0$ are ``comparable", then at least one of the components becomes
extinct in finite time provided that $pq<1$. On the other hand, for any $p=q>0$, there are initial values
for which neither $u$ nor $v$ vanishes in any finite time.

In a quite recent paper \cite{Chen-Y13}, Chen et al. studied the following fast diffusion system
\begin{equation}\label{1.5}
\begin{cases}
u_t=\Delta u^m+v^p,&x\in\Omega,\ t>0,\\
v_t=\Delta v^n+u^q,&x\in\Omega,\ t>0,\\
u(x,t)=v(x,t)=0,&x\in\partial\Omega,\ t>0,\\
u(x,0)=u_0(x),\ v(x,0)=v_0(x),&x\in\Omega,
\end{cases}
\end{equation}
where $0<m,n<1$, $p,q>0$ and $\Omega\subset \mathbb{R}^N(N>2)$ is a bounded
domain with smooth boundary $\partial\Omega$. It was proved that
if $pq>mn$ and the initial data are ``comparable" in some sense,
then any solution of (\ref{1.5}) vanishes in finite time; if $pq=mn$ and $\lambda_1$
(the first eigenvalue of $-\Delta$ in $\Omega$ with homogeneous boundary condition)
is large enough, then there exists a solution vanishing in finite time for small initial data.
However, they did not show whether there exists no-extinction solution or not when $pq<mn$.

Motivated by the works mentioned above, we shall study the extinction properties of
solutions to (\ref{1.1}) for any $N\geq1$ and give some conditions for the solutions to vanish in finite time,
extending some results obtained in \cite{Chen-Y13,Tian08,Yin07} to system (\ref{1.1}). However,
we encounter two difficulties when doing so. The first one
is that the nonlinearities in (\ref{1.1}) may be non-Lipschitz, which excludes the possibility of
applying the general comparison principles to (\ref{1.1}) and the uniqueness is also false in general,
and the second one is that we find it hard to construct a suitable supersolution which vanishes 
in finite time for the case $mn>(p-1)(q-1)$. To overcome these difficulties and to give some sufficient conditions for the solutions to vanish in
finite time, we first establish a weak form comparison principle (which requires that the supersolution
has a positive lower bound in the domain), and then, by referring to a lemma describing the
invariant region of a specially constructed ordinary differential system and by modifying the
integral estimates methods used in \cite{Chen-Y13}, we show that the solutions of (\ref{1.1}) vanish
in finite time when the nonlinear sources are in some sense weak and when the initial data $u_0$ and
$v_0$ are ``comparable". Furthermore, we obtain a non-extinction result for some special cases, which,
to the best of our knowledge, seems to be first work concerning the non-extinction results of quasilinear
parabolic systems with sources. It is worth mentioning that our methods can not only be used to deal with problems
for the equations in (\ref{1.1}) with local or nonlocal sources, but can also be applied to treat the problem in \cite{Chen-Y13}
with a simplified proof. Moreover, the cases $N=1,2$ can also be included.

The rest of this paper is organized as follows. In Section 2, we
introduce the definition of weak solutions, prove a weak comparison principle and establish the local
existence of weak solutions. The proofs of the main results will be presented in Section 3.

\par
\section{Preliminaries}
\setcounter{equation}{0}

In this section, as preliminaries, we introduce some definitions and
notations. It is well known that the equations in (\ref{1.1}) are
singular when $1<p,q<2$, and hence there is no classical solution in general.
Therefore, we have to consider its solutions in some weak sense. We first introduce some
notations which will be used throughout this paper. For any $T\in(0,\infty)$ and $0<t_1<t_2<\infty$,
we denote $Q_T=\Omega\times(0,T)$, $\Gamma_T=\partial\Omega\times(0,T)$ and
\begin{equation*}
\begin{split}
&Q=\Omega\times(0,\infty),\ \ Q_{(t_1,t_2)}=\Omega\times(t_1,t_2),\\
&E_n=\Big\{w\in L^{2n}(Q_T)\cap L^2(Q_T);\frac{\partial w}{\partial
t}\in L^2(Q_T), \nabla w \in L^p(Q_T)\Big\},\\
&E_m=\Big\{w\in L^{2m}(Q_T)\cap L^2(Q_T);\frac{\partial w}{\partial
t}\in L^2(Q_T), \nabla w \in L^q(Q_T)\Big\},\\
&E_p=\Big\{w\in L^2(Q_T);\nabla w\in L^p(Q_T)\Big\},\ E_q=\Big\{w\in L^2(Q_T);\nabla w\in L^q(Q_T)\Big\},\\
&E_{p0}=\{w\in E_p;\ w\mid_{\partial\Omega}=0\},\ E_{q0}=\{w\in
E_q;\ w\mid_{\partial\Omega}=0\}.
\end{split}
\end{equation*}

\begin{definition}\label{weak-solutions}
A nonnegative vector valued function $(u,v)$ with $u\in E_n$ and $v\in E_m$ is called a nonnegative subsolution of (\ref{1.1}) in $Q_T$ provided that
for any $0\leq\phi_1\in E_{p0}$ and $0\leq\phi_2\in E_{q0}$
\begin{equation*}
\begin{cases}
\iint_{Q_T}\Big(\frac{\partial u}{\partial t}\phi_1+|\nabla u|^{p-2}\nabla u\nabla\phi_1\Big)dxd\tau\leq\iint_{Q_T}v^m\phi_1dxd\tau,\\
\iint_{Q_T}\Big(\frac{\partial v}{\partial t}\phi_2+|\nabla v|^{q-2}\nabla v\nabla\phi_2\Big)dxd\tau\leq\iint_{Q_T}u^n\phi_2dxd\tau,\\
u(x,t)\leq0,\ \ v(x,t)\leq0,&x\in\Gamma_T,\\
u(x,0)\leq u_0(x),\ \ v(x,0)\leq v_0(x), &x\in\Omega.
\end{cases}
\end{equation*}
By replacing $\leq$ by $\geq$ in the above inequalities we obtain the definition of weak supersolutions of (\ref{1.1}).
Furthermore, if $(u,v)$ is a weak supersolution as well as a weak subsolution solution, then we call it a weak solution
of Problem (\ref{1.1}).
\end{definition}

In order to prove the main results of this paper, the following weak comparison principle
is needed.

\begin{lemma}\label{comparison}
Let $(\overline{u},\overline{v})$ and $(\underline{u},\underline{v})$ be a pair of bounded weak super and
sub-solution of Problem (\ref{1.1}) in $Q_T$, and there exists a constant $\delta>0$ such
that $(\overline{u},\overline{v})\geq (\delta,\delta)$.
Then $(\overline{u},\overline{v})\geq(\underline{u},\underline{v})$ a.e. in $Q_T$. Moreover, if $m,n\geq1$,
the condition $(\overline{u},\overline{v})\geq (\delta,\delta)$ is unnecessary.
\end{lemma}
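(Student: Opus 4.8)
The plan is to estimate the positive parts of the differences of the two pairs of solutions and to close the argument with Gronwall's inequality. Write $w_1=\underline{u}-\overline{u}$ and $w_2=\underline{v}-\overline{v}$, set $M=\max\{\|\overline{u}\|_\infty,\|\underline{u}\|_\infty,\|\overline{v}\|_\infty,\|\underline{v}\|_\infty\}$, and aim to show that $w_{1+}:=\max\{w_1,0\}$ and $w_{2+}:=\max\{w_2,0\}$ vanish a.e. in $Q_T$. Since $\underline{u}\le 0\le\overline{u}$ and $\underline{v}\le 0\le\overline{v}$ on $\Gamma_T$, the functions $w_{1+},w_{2+}$ have zero boundary trace, and as $\underline{u},\overline{u},\underline{v},\overline{v}$ are bounded with gradients in the right $L^p$, $L^q$ spaces, they are admissible test functions ($w_{1+}\in E_{p0}$, $w_{2+}\in E_{q0}$). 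First I would subtract the supersolution inequality from the subsolution inequality for the first equation, integrate over $Q_t=\Omega\times(0,t)$, and take $\phi_1=w_{1+}$; the second equation is handled analogously with $\phi_2=w_{2+}$. Because $u_t,v_t\in L^2(Q_T)$, a Steklov averaging argument justifies $\iint_{Q_t}\partial_\tau w_1\,w_{1+}\,dxd\tau=\tfrac12\int_\Omega w_{1+}^2(x,t)\,dx-\tfrac12\int_\Omega w_{1+}^2(x,0)\,dx$, and since $w_1(x,0)=\underline{u}(x,0)-\overline{u}(x,0)\le 0$ the initial term vanishes.

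For the diffusion terms I would invoke the standard monotonicity inequality for the $p$-Laplacian, $(|\nabla\underline{u}|^{p-2}\nabla\underline{u}-|\nabla\overline{u}|^{p-2}\nabla\overline{u})\cdot\nabla w_1\ge 0$, and note that $\nabla w_{1+}=\nabla w_1\,\chi_{\{w_1>0\}}$, so these terms are nonnegative and may be discarded; the $q$-Laplacian term is treated identically. This leaves
\begin{equation*}
\frac12\int_\Omega w_{1+}^2(x,t)\,dx\le\iint_{Q_t}(\underline{v}^m-\overline{v}^m)\,w_{1+}\,dxd\tau,\quad
\frac12\int_\Omega w_{2+}^2(x,t)\,dx\le\iint_{Q_t}(\underline{u}^n-\overline{u}^n)\,w_{2+}\,dxd\tau.
\end{equation*}
The crucial step is to bound the source terms. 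On the set $\{w_{1+}>0\}$ the right-hand side is nonzero only where $\underline{v}^m>\overline{v}^m$, i.e. where $\underline{v}>\overline{v}\ge\delta$; applying the mean value theorem to $s\mapsto s^m$ between $\overline{v}$ and $\underline{v}$, the intermediate point $\xi$ exceeds $\delta$, so its derivative $m\,\xi^{m-1}$ is bounded by $m\max\{M^{m-1},\delta^{m-1}\}=:L_1$ whether $m\ge 1$ or $0<m<1$. This gives $0\le(\underline{v}^m-\overline{v}^m)\chi_{\{\underline{v}>\overline{v}\}}\le L_1 w_{2+}$, and likewise $0\le(\underline{u}^n-\overline{u}^n)\chi_{\{\underline{u}>\overline{u}\}}\le L_2 w_{1+}$ with $L_2=n\max\{M^{n-1},\delta^{n-1}\}$.

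Combining these estimates with Young's inequality yields, for $y(t):=\int_\Omega(w_{1+}^2+w_{2+}^2)(x,t)\,dx$, an inequality of the form $y(t)\le C\int_0^t y(\tau)\,d\tau$ with $y(0)=0$, whence Gronwall's inequality forces $y\equiv 0$ and therefore $\underline{u}\le\overline{u}$, $\underline{v}\le\overline{v}$ a.e. in $Q_T$. The only place the hypothesis $(\overline{u},\overline{v})\ge(\delta,\delta)$ is used is in bounding $\xi^{m-1}$ (resp. $\xi^{n-1}$): when $0<m<1$ the map $s\mapsto s^m$ has unbounded derivative near the origin, and the lower bound $\delta$ is exactly what keeps $L_1,L_2$ finite. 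When $m,n\ge 1$ the map $s\mapsto s^m$ is locally Lipschitz on all of $[0,\infty)$, so $L_1\le mM^{m-1}$ and $L_2\le nM^{n-1}$ hold with no lower bound required, which accounts for the final assertion of the lemma. I expect the main technical obstacle to be the rigorous justification of the time-derivative identity via Steklov averaging together with the admissibility of the non-smooth test functions $w_{1+},w_{2+}$; the source-term estimate, though the conceptual heart of the proof, is then routine.
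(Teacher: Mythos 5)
Your proposal is correct and follows essentially the same route as the paper: the same test functions $\chi_{[0,t]}(\underline{u}-\overline{u})_+$, $\chi_{[0,t]}(\underline{v}-\overline{v})_+$, discarding the gradient terms by monotonicity of the $p$-Laplacian, a Lipschitz-type bound on the sources, and Gronwall's inequality. The only difference is that you make explicit (via the mean value theorem with intermediate point $\xi\geq\overline{v}\geq\delta$ and the constant $m\max\{M^{m-1},\delta^{m-1}\}$) the $\delta$-dependent coefficient that the paper merely alludes to when it says the remaining cases are ``much the same'' with coefficients depending on $\delta$.
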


\begin{proof}
The proof is more or less standard. However, for completeness, we prefer to sketch the outline here.
From the definition of weak super and subsolutions, we obtain, for any $0\leq\phi_1\in E_{p0}$ and $0\leq\phi_2\in E_{q0}$,
\begin{eqnarray*}
&&\iint_{Q_T}\Big(\frac{\partial \underline{u}}{\partial t}-\frac{\partial \overline{u}}{\partial t}\Big)\phi_1dxd\tau
+\iint_{Q_T}(|\nabla \underline{u}|^{p-2}\nabla \underline{u}-|\nabla \overline{u}|^{p-2}\nabla \overline{u})\nabla\phi_1dxd\tau\\
&&\leq\iint_{Q_T}(\underline{v}^m-\overline{v}^m)\phi_1dxd\tau,\\
&&\iint_{Q_T}\Big(\frac{\partial \underline{v}}{\partial t}-\frac{\partial \overline{v}}{\partial t}\Big)\phi_2dxd\tau
+\iint_{Q_T}(|\nabla \underline{v}|^{q-2}\nabla \underline{v}-|\nabla \overline{v}|^{q-2}\nabla \overline{v})\nabla\phi_2dxd\tau\\
&&\leq\iint_{Q_T}(\underline{u}^n-\overline{u}^n)\phi_2dxd\tau.
\end{eqnarray*}
We first prove the conclusion when $m,n\geq1$. Denote $M=\max\Big\{\|\overline{u}\|_{L^\infty(Q_T)},\|\overline{v}\|_{L^\infty(Q_T)},\|\underline{u}\|_{L^\infty(Q_T)}$, $\|\underline{v}\|_{L^\infty(Q_T)}\Big\}$.
For any $t\in(0,T)$, by choosing $\phi_1=\chi_{[0,t]}(\underline{u}-\overline{u})_+$, $\phi_2=\chi_{[0,t]}(\underline{v}-\overline{v})_+$,
we have
\begin{eqnarray*}
&&\iint_{Q_t}\Big(\frac{\partial \underline{u}}{\partial t}-\frac{\partial \overline{u}}{\partial t}\Big)(\underline{u}-\overline{u})_+dxd\tau
+\iint_{Q_t}(|\nabla \underline{u}|^{p-2}\nabla \underline{u}-|\nabla \overline{u}|^{p-2}\nabla \overline{u})\nabla(\underline{u}-\overline{u})_+dxd\tau\\
&&\leq\iint_{Q_t}(\underline{v}^m-\overline{v}^m)(\underline{u}-\overline{u})_+dxd\tau,\\
&&\leq mM^{m-1}\iint_{Q_t}(\underline{v}-\overline{v})_+(\underline{u}-\overline{u})_+dxd\tau,
\end{eqnarray*}
where $\chi_{[0,t]}$ is the characteristic function defined on $[0,t]$ and $s_+=\max\{s,0\}$.
By a direct computation, we arrive at
\begin{eqnarray}\label{2.1}
&&\int_{\Omega}(\underline{u}-\overline{u})^2_+dx+2\iint_{Q_t}(|\nabla \underline{u}|^{p-2}\nabla \underline{u}-|\nabla \overline{u}|^{p-2}\nabla \overline{u})\nabla(\underline{u}-\overline{u})_+dxd\tau\nonumber\\
&\leq& 2mM^{m-1}\iint_{Q_t}(\underline{v}-\overline{v})_+(\underline{u}-\overline{u})_+dxd\tau.
\end{eqnarray}
Symmetrically, we have
\begin{eqnarray}\label{2.2}
&&\int_{\Omega}(\underline{v}-\overline{v})^2_+dx+2\iint_{Q_t}(|\nabla \underline{v}|^{q-2}\nabla \underline{v}-|\nabla \overline{v}|^{q-2}\nabla \overline{v})\nabla(\underline{v}-\overline{v})_+dxd\tau\nonumber\\
&\leq& 2nM^{n-1}\iint_{Q_t}(\underline{v}-\overline{v})_+(\underline{u}-\overline{u})_+dxd\tau.
\end{eqnarray}
Recalling the monotonicity of $p$-Laplace operator and Gronwall's inequality one has
$$\int_\Omega[(\underline{u}-\overline{u})^2_++(\underline{v}-\overline{v})^2_+]dx\leq0,$$
which implies that $(\overline{u},\overline{v})\geq(\underline{u},\underline{v})$.
The proof of the other cases is much the same as above only with the exception that the coefficients
on the right hand side of (\ref{2.1}) and (\ref{2.2}) may depend on $\delta$. We omit the details
and the proof is complete.
\end{proof}

\begin{proposition}\label{existence}
Assume that $0\leq u_0(x)\in L^{\infty}(\Omega)\cap
W^{1,p}_0(\Omega)$ and $0\leq v_0(x)\in L^{\infty}(\Omega)\cap
W^{1,q}_0(\Omega)$. Then there exists a $T=T(u_0,v_0)>0$
such that Problem (\ref{1.1}) admits at least one bounded and
nonnegative weak solution $(u,v)$ in the cylinder $Q_T$; Furthermore,
if $m,n\geq1$, then the weak solution is unique.
\end{proposition}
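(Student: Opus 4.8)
The plan is to construct the solution by a regularization-and-approximation scheme, derive a priori estimates that are uniform in the regularization parameter, and then pass to the limit. First I would regularize the three sources of difficulty at once: replace the singular diffusion coefficients $|\nabla u|^{p-2}$ and $|\nabla v|^{q-2}$ by the nondegenerate $(|\nabla u|^2+\varepsilon)^{(p-2)/2}$ and $(|\nabla v|^2+\varepsilon)^{(q-2)/2}$; replace the possibly non-Lipschitz reaction terms $v^m,u^n$ by globally Lipschitz approximations $f_\varepsilon,g_\varepsilon$ with $f_\varepsilon(v)\to v_+^m$ and $g_\varepsilon(u)\to u_+^n$ as $\varepsilon\to0$; and smooth the initial data to $u_{0\varepsilon},v_{0\varepsilon}$. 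The regularized system is uniformly parabolic with Lipschitz nonlinearities, and I would establish existence of a solution $(u_\varepsilon,v_\varepsilon)$ by a Schauder fixed-point argument that exploits the coupling structure: given $(\bar u,\bar v)$ one solves the two decoupled scalar nondegenerate quasilinear equations with prescribed sources $f_\varepsilon(\bar v)$ and $g_\varepsilon(\bar u)$, and the resulting solution map is shown to be compact, continuous, and to map a suitable ball into itself.

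The heart of the matter is to obtain estimates independent of $\varepsilon$ and, in particular, to pin down the common existence time $T$. For the $L^\infty$ bound I would compare $(u_\varepsilon,v_\varepsilon)$ with the spatially constant supersolution $(U(t),V(t))$ furnished by the ordinary differential system $U'=V^m$, $V'=U^n$ with $U(0)=V(0)=1+\max\{\|u_0\|_{L^\infty},\|v_0\|_{L^\infty}\}$; since the diffusion term vanishes on constants and $U,V\ge0$ dominate the data on the parabolic boundary, the comparison principle for the regularized (Lipschitz) problem yields $0\le u_\varepsilon\le U(t)$ and $0\le v_\varepsilon\le V(t)$ on the maximal interval of existence of the ODE. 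This simultaneously produces a uniform $L^\infty$ bound and singles out a local time $T=T(u_0,v_0)>0$ on which all the $(u_\varepsilon,v_\varepsilon)$ are defined; choosing the truncation level of $f_\varepsilon,g_\varepsilon$ above this bound makes the truncation inactive. Nonnegativity would follow by testing the regularized equations with the negative parts $(u_\varepsilon)_-,(v_\varepsilon)_-$. With the $L^\infty$ bound in hand, testing with $u_\varepsilon,v_\varepsilon$ and with $\partial_t u_\varepsilon,\partial_t v_\varepsilon$ gives uniform bounds for $\nabla u_\varepsilon$ in $L^p(Q_T)$, $\nabla v_\varepsilon$ in $L^q(Q_T)$, and for $\partial_t u_\varepsilon,\partial_t v_\varepsilon$ in $L^2(Q_T)$.

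These bounds let me extract a subsequence with weak convergence of the gradients and time derivatives and, by the Aubin--Lions lemma, strong convergence $u_\varepsilon\to u$, $v_\varepsilon\to v$ in $L^2(Q_T)$ and a.e. The source terms pass to the limit via a.e. convergence together with the uniform $L^\infty$ bound and dominated convergence. The one nonroutine point is the nonlinear diffusion: the fields $(|\nabla u_\varepsilon|^2+\varepsilon)^{(p-2)/2}\nabla u_\varepsilon$ are bounded in $L^{p'}(Q_T)$ and converge weakly to some $\chi$, and I would identify $\chi=|\nabla u|^{p-2}\nabla u$ by Minty's monotonicity trick, using the monotonicity of the $p$-Laplace operator already invoked in Lemma~\ref{comparison}. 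Checking that $(u,v)$ satisfies the integral identities of Definition~\ref{weak-solutions} and attains the initial data completes the existence assertion. Finally, when $m,n\ge1$ the reaction terms are locally Lipschitz, so Lemma~\ref{comparison} applies without the positivity restriction; using it with the two solutions alternately in the roles of super- and subsolution forces them to coincide, which gives uniqueness.

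I expect the principal obstacle to be the interplay of the singular diffusion limit with the control of the $L^\infty$ norm: the Minty argument must be carried out carefully because the approximating operators themselves depend on $\varepsilon$, and the existence time $T$ has to be fixed through the ODE supersolution before any of the energy estimates can be rendered uniform.
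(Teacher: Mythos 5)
Your proposal is correct and follows essentially the same route as the paper: regularize the diffusion to $(|\nabla u|^2+\varepsilon)^{(p-2)/2}$ with smoothed initial data, get classical solutions via Schauder's fixed point theorem, obtain the uniform $L^\infty$ bound and the existence time $T$ by comparison with the ODE system $U'=V^m$, $V'=U^n$, derive uniform gradient and time-derivative estimates by testing with $u_\varepsilon$ and $\partial_t u_\varepsilon$, pass to the limit identifying the flux by the monotonicity (Minty-type) trick, and deduce uniqueness for $m,n\ge 1$ from the comparison lemma. The only departures are cosmetic (the paper does not Lipschitz-truncate the reaction terms, and it gets nonnegativity from the weak maximum principle rather than by testing with negative parts), so there is nothing substantive to reconcile.
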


\begin{proof}
Consider the following auxiliary problem
\begin{equation}\label{regularization}
\begin{cases}
u_{kt}=\mathrm{div}((|\nabla u_k|^2+\varepsilon_k)^\frac{p-2}{2}\nabla u_k)+v_k^m,&x\in\Omega,\ t>0,\\
v_{kt}=\mathrm{div}((|\nabla v_k|^2+\delta_k)^\frac{q-2}{2}\nabla v_k)+u_k^n,&x\in\Omega,\ t>0,\\
u(x,t)=v(x,t)=0,&x\in\partial\Omega,\ t>0,\\
u(x,0)=u^{\varepsilon_k}_0(x),\ v(x,0)=v^{\delta_k}_0(x),&x\in\Omega,
\end{cases}
\end{equation}
where $\{\varepsilon_k\}$, $\{\delta_k\}$ are strictly decreasing sequences, $0<\varepsilon_k,\delta_k<1$,
and $\varepsilon_k,\ \delta_k\rightarrow 0$ as $k\rightarrow\infty$. $u^{\varepsilon_k}_0\in C^\infty_0(\overline{\Omega})$
and $v^{\delta_k}_0\in C^\infty_0(\overline{\Omega})$ are approximation functions of the initial data $u_0(x)$ and $v_0(x)$, respectively.
$\|u^{\varepsilon_k}_0\|_{L^\infty(\Omega)}\leq\|u_0\|_{L^\infty(\Omega)}$,
$\|\nabla u^{\varepsilon_k}_0\|_{L^p(\Omega)}\leq C_0\|\nabla u_0\|_{L^p(\Omega)}$ for all $\varepsilon_k$, and
$u^{\varepsilon_k}_0\rightarrow u_0$ strongly in $W^{1,p}_0(\Omega)$; $\|v^{\delta_k}_0\|_{L^\infty(\Omega)}\leq\|v_0\|_{L^\infty(\Omega)}$,
$\|\nabla v^{\delta_k}_0\|_{L^q(\Omega)}\leq C_0\|\nabla v_0\|_{L^q(\Omega)}$ for all $\delta_k$, and
$v^{\delta_k}_0\rightarrow v_0$ strongly in $W^{1,q}_0(\Omega)$. Here $C_0>0$ is a constant independent of $k$.

Since (\ref{regularization}) is a nondegenerate problem for each fixed $\varepsilon^k$ and $\delta^k$,
it is easy to prove that it admits a unique classical solution $(u_k, v_k)$ by using the Schauder's
fixed point theorem. Moreover, by the weak maximum principle we know that $u_k, v_k\geq0$ for each $k$.
To find the limit functions of $(u_k,v_k)$, we need to derive some uniform estimates.
The whole process will be divided into four steps.

{\bf Step 1.} There exist a small constant $T_0>0$ and a positive constant $M_1$, independent of $k$, such that
\begin{equation}\label{upper bound}
\|u_k\|_{L^\infty(Q_{T_0})},\ \|v_k\|_{L^\infty(Q_{T_0})}\leq M_1.
\end{equation}
To this end, we only need to consider the following Cauchy problem
\begin{equation}\label{ODE}
\begin{split}
&\frac{dU}{dt}=V^m,\qquad\frac{dV}{dt}=U^n,\ \ t>0,\\
&U(0)=\|u_{0}\|_{L^{\infty}(\Omega)},~~V(0)=\|v_{0}\|_{L^{\infty}(\Omega)}.
\end{split}
\end{equation}
It is known from the theories in ODEs that there exists a constant $t_0>0$
depending only on $\|u_{0}\|_{L^{\infty}(\Omega)}$ and $\|v_{0}\|_{L^{\infty}(\Omega)}$ such that Problem (\ref{ODE})
admits a solution $(U,V)$ on $[0,t_0]$. Moreover, $(U,V)$ is increasing. By the comparison principle for uniformly parabolic equations (see \cite{Potter67})
we know that $((u_k,v_k))\leq (U,V)$ as long as they exist. Set $T_0=\frac{t_0}{2}$ and $M_1=\max\{U(T_0),V(T_0)\}$, then
(\ref{upper bound}) follows.

{\bf Step 2.} There exists a constant $M_2>0$, independent of $k$, such that
\begin{equation}\label{bound of gradient}
\|\nabla u_k\|_{L^p(Q_{T_0})},\ \ \|\nabla v_k\|_{L^q(Q_{T_0})}\leq M_2.
\end{equation}
Multiplying the first equation in (\ref{regularization}) by $u_k$ and integrating the results over $Q_{T_0}$, we obtain
\begin{eqnarray*}
&&\frac{1}{2}\int_\Omega u_k^2(x,T_0)dx+\iint_{Q_{T_0}}(|\nabla u_k|^2+\varepsilon_k)^\frac{p-2}{2}|\nabla u_k|^2dxdt\\
&=&\iint_{Q_{T_0}}v_k^m u_k dxdt+\frac{1}{2}\int_\Omega \Big(u^{\varepsilon_k}_0(x)\Big)^2dx.
\end{eqnarray*}
By combining the fact $\|u^{\varepsilon_k}_0\|_{L^\infty(\Omega)}\leq\|u_0\|_{L^\infty(\Omega)}$ with (\ref{upper bound})
we have
\begin{equation*}
\iint_{Q_{T_0}}(|\nabla u_k|^2+\varepsilon_k)^\frac{p-2}{2}|\nabla u_k|^2dxdt\leq C,
\end{equation*}
where $C$ is a positive constant that does not depend on $k$. Note that
\begin{eqnarray*}
&&\iint_{Q_{T_0}}|\nabla u_k|^pdxdt\leq\iint_{Q_{T_0}}(|\nabla u_k|^2+\varepsilon_k)^\frac{p}{2}dxdt\\
&=&\iint_{Q_{T_0}}(|\nabla u_k|^2+\varepsilon_k)^\frac{p-2}{2}|\nabla u_k|^2dxdt+\varepsilon_k\iint_{Q_{T_0}}(|\nabla u_k|^2+\varepsilon_k)^\frac{p-2}{2}dxdt.
\end{eqnarray*}
To prove the boundedness of $\|\nabla u_k\|_{L^p(Q_{T_0})}$, it suffices to estimate the upper bound of
\begin{equation*}
I=\varepsilon_k\iint_{Q_{T_0}}(|\nabla u_k|^2+\varepsilon_k)^\frac{p-2}{2}dxdt.
\end{equation*}
Since $1<p<2$, it follows from $0<\varepsilon_k<1$ that
\begin{equation*}
I=\iint_{Q_{T_0}}\Big(\frac{\varepsilon_k}{|\nabla u_k|^2+\varepsilon_k}\Big)^{\frac{2-p}{2}}\varepsilon_k^{\frac{p}{2}}dxdt
\leq\iint_{Q_{T_0}}\varepsilon_k^{\frac{p}{2}}dxdt\leq|Q_{T_0}|.
\end{equation*}
By applying similar arguments we can prove that $\|\nabla v_k\|_{L^q(Q_{T_0})}$ is also bounded uniformly in $k$.
Therefore, (\ref{bound of gradient}) is valid.

{\bf Step 3.} There exists a constant $M_3>0$, independent of $k$, such that
\begin{equation}\label{bound of time derivitive}
\|u_{kt}\|_{L^2(Q_{T_0})},\ \ \|v_{kt}\|_{L^2(Q_{T_0})}\leq M_3.
\end{equation}
To do so, multiplying the first equation in (\ref{regularization}) by $u_{kt}$ and integrating the results over $Q_{T_0}$, one has
\begin{eqnarray*}
&&\iint_{Q_{T_0}}u^2_{kt}dxdt+\iint_{Q_{T_0}}(|\nabla u_k|^2+\varepsilon_k)^\frac{p-2}{2}\nabla u_k\nabla u_{kt}dxdt\\
&=&\iint_{Q_{T_0}}v_k^mu_{kt}dxdt.
\end{eqnarray*}
By using Cauchy's inequality and the equality
\begin{eqnarray*}
&&\iint_{Q_{T_0}}(|\nabla u_k|^2+\varepsilon_k)^\frac{p-2}{2}\nabla u_k\nabla u_{kt}dxdt\\
&=&\frac{1}{p}\int_\Omega(|\nabla u_k(x,T_0)|^2+\varepsilon_k)^\frac{p}{2}dx-\frac{1}{p}\int_\Omega(|\nabla u^{\varepsilon_k}_0|^2+\varepsilon_k)^\frac{p}{2}dx,
\end{eqnarray*}
we deduce that
\begin{eqnarray*}
\iint_{Q_{T_0}}u^2_{kt}dxdt&\leq&-\frac{1}{p}\int_\Omega(|\nabla u_k(x,T_0)|^2+\varepsilon_k)^\frac{p}{2}dx+\frac{1}{p}\int_\Omega(|\nabla u^{\varepsilon_k}_0|^2+\varepsilon_k)^\frac{p}{2}dx\\
&+&\frac{1}{2}\iint_{Q_{T_0}}v_k^{2m}dxdt+\frac{1}{2}\iint_{Q_{T_0}}u^2_{kt}dxdt,
\end{eqnarray*}
which implies that
\begin{equation}\label{2.8}
\iint_{Q_{T_0}}u^2_{kt}dxdt\leq\iint_{Q_{T_0}}v_k^{2m}dxdt+\frac{2}{p}\int_\Omega(|\nabla u^{\varepsilon_k}_0|^2+\varepsilon_k)^\frac{p}{2}dx.
\end{equation}
Noticing $1<p<2$, recalling $\|\nabla u^{\varepsilon_k}_0\|_{L^p(\Omega)}\leq C_0\|\nabla u_0\|_{L^p(\Omega)}$
and the basic inequality
$$(a+b)^r\leq a^r+b^r,\ \ a,b>0, r\geq1,$$
we conclude that
\begin{equation*}
\int_\Omega(|\nabla u^{\varepsilon_k}_0|^2+\varepsilon_k)^\frac{p}{2}dx\leq\int_\Omega(|\nabla u^{\varepsilon_k}_0|^2+1)^\frac{p}{2}dx
\leq\int_\Omega(|\nabla u^{\varepsilon_k}_0|^p+1)dx\leq M_3,
\end{equation*}
which together with (\ref{2.8}) guarantees the boundedness of $\|u_{kt}\|_{L^2(Q_{T_0})}$.
The upper bound of $\|v_{kt}\|_{L^2(Q_{T_0})}$ can be derived similarly.

Inequalities (\ref{upper bound}), (\ref{bound of gradient}) and (\ref{bound of time derivitive}) imply that
there exists a subsequence of $(u_k,v_k)$, still denoted by $(u_k,v_k)$ such that
\begin{align}
&u_{k}\rightarrow u,~v_{k}\rightarrow v,~~ for~~a.e.~~(x,t)\in~Q_{T_{0}},\\
&\nabla u_{k}\rightharpoonup \nabla u,~~~~in~~
L^{p}(0,T_{0};L^{p}(\Omega)),\\
&\nabla v_{k}\rightharpoonup \nabla v,~~~~~in~~
L^{q}(0,T_{0};L^{q}(\Omega)),\\
&u_{kt}\rightharpoonup u_{t},~v_{kt}\rightharpoonup v_{t},~~~~in~~
L^{2}(0,T_{0};L^{2}(\Omega)),\\
&|\nabla
u_{k}|^{p-2}(u_{k})_{x_{i}}\rightharpoonup\omega_{i},~~~~in~~
L^{\frac{p}{p-1}}(0,T_{0};L^{\frac{p}{p-1}}(\Omega)),\\
&|\nabla v_{k}|^{q-2}(u_{k})_{x_{i}}\rightharpoonup z_{i},~~~~in~~
L^{\frac{q}{q-1}}(0,T_{0};L^{\frac{q}{q-1}}(\Omega)),
\end{align}
where ``$\rightharpoonup$" denotes weak convergence in the corresponding Banach spaces.

{\bf Step 4.} We show that $|\nabla u|^{p-2}u_{x_{i}}=\omega_{i}$ and $|\nabla v|^{q-2}v_{x_{i}}=z_{i}$.

This can be done by choosing $\phi_1=\Phi_1(u_k-u)$ and $\phi_2=\Phi_2(v_k-v)$ as the test functions
with nonnegative functions $\Phi_1,\Phi_2\in C^{1,1}(Q_{T_0})$ and by using the same trick as that in \cite{Zhao93}.
We omit the details.

Thus, the proof of the local existence of weak solutions is complete by a standard limiting process.
The uniqueness of the solution with $m,n\geq1$ is a direct corollary of Lemma~\ref{comparison}.
The proof is complete.
\end{proof}

\par
\section{Proofs of the main results}
\setcounter{equation}{0}

In this section, by using the method of comparison
principle and integral estimates, we shall prove our main results
and give some sufficient conditions for the solutions of (\ref{1.1})
to vanish in finite time. The following two lemmas, which describe the invariant region
of an ordinary differential system, will play important roles in the forthcoming
proofs.

\begin{lemma}\label{le3.1}
\cite{Chen-Y13}\ Let $a_{i}, b_{i}(i=1,2), m, n$ be positive constants, $1<p,q <2$ and $mn\geq(p-1)(q-1)$. Denote
$$\mathcal{Q}=\Big\{(W_1,W_2)\in \mathbb{R}^2|W_1\geq0, W_2\geq0\ and\ \Big(\frac{b_1}{\delta a_1}\Big)^{\frac{1}{p-1}}W_2^{\frac{m}{p-1}}\leq W_1\leq\Big(\frac{\delta a_2}{b_2}\Big)^{\frac{1}{n}}W_2^{\frac{q-1}{n}}\Big\},$$
where $0<\delta<1$. Suppose that $W_1, W_2$ are nonnegative and solve
\begin{equation}\label{3.1}
\begin{cases}
W_1'(t)=-a_1W_1^{p-1}(t)+b_1W_2^m(t),\ t\in(0,T),\\
W_2'(t)=-a_2W_2^{q-1}(t)+b_2W_1^n(t),\ t\in(0,T).
\end{cases}
\end{equation}
If $(W_1(0),W_2(0))\in \mathcal{Q}$, then $(W_1,W_2) \in \mathcal{Q}$.
\end{lemma}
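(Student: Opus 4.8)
The plan is to prove that $\mathcal{Q}$ is positively invariant for the flow of (\ref{3.1}) by a first-exit-time argument, which is just the Nagumo subtangential criterion packaged as a contradiction. First I would encode the two defining inequalities of $\mathcal{Q}$ through the auxiliary functions
\[
\Phi(W_1,W_2)=a_1W_1^{p-1}-\frac{b_1}{\delta}W_2^{m},\qquad \Psi(W_1,W_2)=a_2W_2^{q-1}-\frac{b_2}{\delta}W_1^{n},
\]
so that $\mathcal{Q}=\{W_1,W_2\ge 0,\ \Phi\ge 0,\ \Psi\ge 0\}$, the lower power-curve being $\{\Phi=0\}$ and the upper one $\{\Psi=0\}$. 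Assuming toward a contradiction that a solution issuing from $\mathcal{Q}$ leaves it, I let $t_0$ be the first exit time; then at $t_0$ the trajectory lies on $\partial\mathcal{Q}$ and at least one of $\Phi,\Psi$ vanishes there while its derivative along the flow is nonpositive. Hence it suffices to show $\frac{d}{dt}\Phi\ge 0$ on $\{\Phi=0\}\cap\mathcal{Q}$ and $\frac{d}{dt}\Psi\ge 0$ on $\{\Psi=0\}\cap\mathcal{Q}$, with the coordinate axes and the corner where the two curves meet dispatched separately by continuity and by checking that the field lies in the corresponding tangent cone. The cooperative (quasimonotone) structure of (\ref{3.1}) is what makes such a region argument natural here.

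The engine of the proof is the algebraic simplification forced by each defining relation. On $\{\Phi=0\}$ one has $a_1W_1^{p-1}=\frac{b_1}{\delta}W_2^{m}$, and substituting this into the first equation of (\ref{3.1}) collapses $W_1'$ to a single monomial in $W_2$ carrying the definite sign supplied by the factor $1-\frac1\delta$; the relation $\{\Psi=0\}$ fixes the sign of $W_2'$ in the same way. Differentiating $\Phi$ along (\ref{3.1}) and inserting this value of $W_1'$ leaves a cross term built from $W_2'=-a_2W_2^{q-1}+b_2W_1^{n}$, which I would control by invoking the complementary constraint $\Psi\ge 0$ (valid since the point lies in $\mathcal{Q}$): this simultaneously bounds $b_2W_1^{n}$ by a multiple of $a_2W_2^{q-1}$ and confines $W_2$ to the sub-range on which the lower curve still lies below the upper one. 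After this step $\frac{d}{dt}\Phi$ reduces to comparing two monomials in $W_2$, and the exponents line up precisely because $mn\ge(p-1)(q-1)$ is equivalent to $\frac{m}{p-1}\ge\frac{q-1}{n}$: the growth exponent of the lower curve dominates that of the upper one, so along $\{\Phi=0\}$ the coupling term $b_2W_1^{n}$ is proportional to $W_2^{mn/(p-1)}$ with $\frac{mn}{p-1}\ge q-1$ and can be absorbed against $a_2W_2^{q-1}$. The estimate for $\Psi$ on $\{\Psi=0\}$ is entirely symmetric, with $(p,m,a_1,b_1)$ and $(q,n,a_2,b_2)$ interchanged.

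The transparent core is the borderline case $mn=(p-1)(q-1)$, where $\frac{m}{p-1}=\frac{q-1}{n}$ and $\mathcal{Q}$ degenerates to a genuine cone $\{\alpha W_2^{\kappa}\le W_1\le\gamma W_2^{\kappa}\}$: then every monomial in $\frac{d}{dt}\Phi$ and $\frac{d}{dt}\Psi$ carries the same power of $W_2$, the subtangential inequalities become pure comparisons of constants, and they can be read off at once. I expect the main obstacle to be the strict case $mn>(p-1)(q-1)$, in which the two bounding curves have different exponents and meet at a corner; there the two coupling terms on the two curved pieces must be controlled simultaneously, and one must verify that the field points inward uniformly along each boundary, including a neighborhood of the corner where the $W_2$-confinement coming from the complementary constraint is tightest. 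The non-Lipschitz character of the nonlinearities (the exponents $p-1,q-1<1$, and possibly $m,n<1$) is exactly what blocks a direct comparison principle and forces the monomial-by-monomial bookkeeping above, so keeping these exponents aligned uniformly along each boundary is the step that will require the most care.
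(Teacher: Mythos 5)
The first thing to note is that the paper itself offers no proof of this lemma: it is quoted from \cite{Chen-Y13}, so there is no internal argument to compare yours against, and your proposal has to be judged on its own. Your strategy (first-exit time plus Nagumo's subtangential criterion, using the complementary constraint to control the cross term) is the natural one, and your reduction is sound up to the decisive step. That step, however, is wrong: the exponents do \emph{not} ``line up because $mn\ge(p-1)(q-1)$''. Carry the computation out. On $\{\Phi=0\}$ one has $W_1'=-\tfrac{(1-\delta)b_1}{\delta}W_2^{m}$, and inserting the bound $W_2'\le-(1-\delta)a_2W_2^{q-1}$ (which is what $\Psi\ge0$ gives) yields
\[
\frac{d\Phi}{dt}\;\ge\;\frac{(1-\delta)b_1}{\delta}\,W_2^{m}\Bigl[\,m\,a_2\,W_2^{q-2}-(p-1)\,a_1\,W_1^{p-2}\Bigr],
\]
so what must be verified is $\tfrac{m a_2}{(p-1)a_1}\ge W_2^{2-q}/W_1^{2-p}$. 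On the curve $W_1=\bigl(\tfrac{b_1}{\delta a_1}\bigr)^{1/(p-1)}W_2^{m/(p-1)}$ the right-hand side equals a positive constant times $W_2^{(2-q)-m(2-p)/(p-1)}$, and this exponent is not controlled by your hypothesis: whenever $m(2-p)>(2-q)(p-1)$ it is negative, the right-hand side blows up as $W_2\to0^{+}$, and the subtangential inequality fails near the origin (the same conclusion holds with the exact value of $W_2'$, since on this curve $b_2W_1^{n}\sim W_2^{mn/(p-1)}$ with $mn/(p-1)\ge q-1$ is of higher order). Moreover, the favorable exponent sign on the lower curve requires $m(2-p)\le(2-q)(p-1)$, and on the upper curve, symmetrically, $n(2-q)\le(2-p)(q-1)$; multiplying the two forces $mn\le(p-1)(q-1)$. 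Hence in the strict case $mn>(p-1)(q-1)$ at least one of the two curves always carries an outward-pointing field near the origin: the danger point is not the corner you singled out, but the origin end of the lens.

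This gap is not repairable, because the statement as quoted is false. Take $p=q=\tfrac32$, $m=n=1$, $a_1=a_2=b_1=b_2=1$, $\delta=\tfrac12$; every hypothesis holds, and $\mathcal{Q}=\{(W_1,W_2):4W_2^{2}\le W_1\le\tfrac12W_2^{1/2}\}$. The point $(W_1,W_2)=(4\epsilon^{2},\epsilon)$ lies in $\mathcal{Q}$ for $0<\epsilon\le\tfrac14$, and there
\[
W_1'=-2\epsilon+\epsilon=-\epsilon,\qquad W_2'=-\epsilon^{1/2}+4\epsilon^{2},\qquad
\frac{d}{dt}\bigl(W_1-4W_2^{2}\bigr)=-\epsilon+8\epsilon^{3/2}-32\epsilon^{3}<0
\]
for all small $\epsilon>0$; since both coordinates are positive, the field is smooth there, the local solution is unique, and it exits $\mathcal{Q}$ immediately. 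What \emph{is} invariant is the region bounded by the nullclines, i.e.\ the set obtained by putting $\delta=1$ in the definition of $\mathcal{Q}$: on $a_1W_1^{p-1}=b_1W_2^{m}$ one has $W_1'=0$ while $W_2'\le0$ inside the region, so the flow cannot cross it, and symmetrically on the other curve. The $\delta$-shrunken region of the lemma lies strictly inside that one, and the uniform drift $W_1'\le-(1-\delta)a_1W_1^{p-1}$ --- exactly the property Lemma 3.2 needs for finite-time extinction --- is also what pushes trajectories across its flat lower boundary near the origin. So any correct version must either impose further relations on $p,q,m,n,a_i,b_i,\delta$ (under which your monomial comparison does close), or work in the nullcline region, where the decay degenerates on the boundary and the extinction assertion of Lemma 3.2 needs a separate argument. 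Since Corollary 3.1 and the proofs of Theorems 3.1 and 3.3 rest on this lemma, the defect is inherited by the paper (and by \cite{Chen-Y13}), not created by you; but your proposal, as written, asserts precisely the inequality that fails.
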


\begin{lemma}\label{le3.2}
\cite{Chen-Y13}\ Let the hypothesis as in Lemma \ref{le3.1}. Then every nonnegative solution of (\ref{3.1}) vanishes in finite time for every $(W_1(0),W_2(0)) \in \mathcal{Q}$.
\end{lemma}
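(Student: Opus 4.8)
The plan is to exploit the invariance already supplied by Lemma~\ref{le3.1}. Since $(W_1(0),W_2(0))\in\mathcal{Q}$, that lemma guarantees $(W_1(t),W_2(t))\in\mathcal{Q}$ for all $t\in(0,T)$, so the two defining inequalities of $\mathcal{Q}$ hold along the entire trajectory. The guiding idea is that these inequalities let the source (coupling) terms in (\ref{3.1}) be absorbed into the diffusion (decay) terms, thereby decoupling the system into two independent scalar differential inequalities of fast-diffusion type, each of which forces its component to vanish in finite time.

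Concretely, I would first read off from the lower bound $\big(\tfrac{b_1}{\delta a_1}\big)^{1/(p-1)}W_2^{m/(p-1)}\le W_1$, after raising both sides to the power $p-1$, the estimate $b_1W_2^m\le\delta a_1W_1^{p-1}$. Substituting into the first equation of (\ref{3.1}) then yields
$$W_1'(t)\le -a_1W_1^{p-1}+\delta a_1W_1^{p-1}=-(1-\delta)a_1W_1^{p-1}.$$
Symmetrically, the upper bound $W_1\le\big(\tfrac{\delta a_2}{b_2}\big)^{1/n}W_2^{(q-1)/n}$ gives $b_2W_1^n\le\delta a_2W_2^{q-1}$, hence
$$W_2'(t)\le -(1-\delta)a_2W_2^{q-1}.$$
Since each right-hand side is nonpositive, both $W_1$ and $W_2$ are non-increasing along the solution, a fact I will use to conclude that once a component reaches $0$ it remains there.

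The next step is to integrate these scalar inequalities. Because $1<p<2$ one has $0<p-1<1$, and multiplying the first inequality by $(2-p)W_1^{1-p}$ (valid while $W_1>0$) gives $\frac{d}{dt}W_1^{2-p}\le-(2-p)(1-\delta)a_1$, so that
$$W_1^{2-p}(t)\le W_1^{2-p}(0)-(2-p)(1-\delta)a_1\,t$$
for as long as $W_1$ stays positive. As the right-hand side becomes negative after $T_1:=\dfrac{W_1^{2-p}(0)}{(2-p)(1-\delta)a_1}$, the component $W_1$ must reach $0$ by time $T_1$, and being non-increasing and nonnegative it stays at $0$ thereafter. The identical computation with $0<q-1<1$ forces $W_2$ to vanish by $T_2:=\dfrac{W_2^{2-q}(0)}{(2-q)(1-\delta)a_2}$. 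Consequently $(W_1,W_2)\equiv(0,0)$ for $t\ge\max\{T_1,T_2\}$, which is exactly the asserted finite-time extinction.

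I do not anticipate a serious obstacle, precisely because Lemma~\ref{le3.1} has already performed the delicate part by confining the trajectory to $\mathcal{Q}$; the only point requiring care is to note that the two differential inequalities are legitimate for \emph{all} $t$, which is exactly what the invariance of $\mathcal{Q}$ provides. It is worth highlighting where the hypotheses enter: the restriction $1<p,q<2$ is indispensable, since it places the exponents $p-1$ and $q-1$ in $(0,1)$ and thereby converts mere decay into extinction in finite time (for $p-1\ge1$ the comparison ODE would only decay to $0$ as $t\to\infty$), whereas the condition $mn\ge(p-1)(q-1)$ is what renders $\mathcal{Q}$ a nonempty invariant region via Lemma~\ref{le3.1} in the first place.
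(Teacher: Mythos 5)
Your proof is correct. Note that the paper itself gives no proof of this lemma---it is quoted from \cite{Chen-Y13}---and your argument (using the invariance of $\mathcal{Q}$ from Lemma \ref{le3.1} to absorb each coupling term into the corresponding decay term, then integrating the resulting scalar inequalities $W_1'\leq-(1-\delta)a_1W_1^{p-1}$ and $W_2'\leq-(1-\delta)a_2W_2^{q-1}$ with exponents in $(0,1)$) is exactly the standard route taken in that reference, so it cleanly fills the gap.
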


The following corollary is a direct consequence of Lemma \ref{le3.2} and the comparison argument.

\begin{corollary}\label{co3.1}
Let $a_{i}, b_{i}(i=1,2), m, n$ be positive constants, $1<p,q <2$ and $mn\geq(p-1)(q-1)$.
Assume that $(W_1,W_2)$ satisfies the following differential inequalities
\begin{equation}\label{3.2}
\begin{cases}
W_1'(t)\leq-a_1W_1^{p-1}(t)+b_1W_2^m(t),\\
W_2'(t)\leq-a_2W_2^{q-1}(t)+b_2W_1^n(t).
\end{cases}
\end{equation}
Then every nonnegative solution of (\ref{3.2}) vanishes in finite time for every $(W_1(0),W_2(0)) \in \mathcal{Q}$.
\end{corollary}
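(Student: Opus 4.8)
The plan is to squeeze the given subsolution $(W_1,W_2)$ between $0$ and a genuine solution of the equality system (\ref{3.1}) and then invoke Lemma \ref{le3.2}. Concretely, let $(\overline{W}_1,\overline{W}_2)$ be a solution of the companion system (\ref{3.1}) issued from the same initial datum, $(\overline{W}_1(0),\overline{W}_2(0))=(W_1(0),W_2(0))\in\mathcal{Q}$. By Lemma \ref{le3.1} the region $\mathcal{Q}$ is invariant, so $(\overline{W}_1,\overline{W}_2)$ stays in $\mathcal{Q}$, and by Lemma \ref{le3.2} there is a finite $T^\ast$ with $\overline{W}_1(t)=\overline{W}_2(t)=0$ for $t\ge T^\ast$. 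If the comparison
\[
0\le W_1(t)\le\overline{W}_1(t),\qquad 0\le W_2(t)\le\overline{W}_2(t)
\]
holds on the common interval of existence, then $0\le W_i(t)\le\overline{W}_i(t)=0$ forces $W_1(t)=W_2(t)=0$ for $t\ge T^\ast$, which is precisely finite-time extinction. The lower bounds are contained in the hypothesis that the solution of (\ref{3.2}) is nonnegative.

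The heart of the matter is therefore the upper comparison $W_i\le\overline{W}_i$, which I would establish by exploiting the \emph{cooperative} (quasimonotone nondecreasing) structure of (\ref{3.1}): since $b_1,b_2>0$, the first right-hand side is nondecreasing in $W_2$ and the second in $W_1$, while the diagonal absorption terms $-a_1(\cdot)^{p-1}$, $-a_2(\cdot)^{q-1}$ are nonincreasing. Setting $D_i=W_i-\overline{W}_i$ (so $D_i(0)=0$), subtracting the equalities from the inequalities and testing with $(D_i)_+$ gives, on $\{D_1>0\}$,
\[
\frac{1}{2}\frac{d}{dt}(D_1)_+^2\le -a_1\big(W_1^{p-1}-\overline{W}_1^{p-1}\big)(D_1)_+ + b_1\big(W_2^m-\overline{W}_2^m\big)(D_1)_+ .
\]
Monotonicity of $s\mapsto s^{p-1}$ (note $0<p-1<1$) renders the first term nonpositive, so it may be discarded, leaving the coupling term to be controlled by $(D_2)_+$; a symmetric estimate holds for $(D_2)_+$.

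When $m,n\ge1$ the maps $s\mapsto s^m,\,s^n$ are Lipschitz on the bounded range of the solutions, so $(W_2^m-\overline{W}_2^m)_+\le C(D_2)_+$ and likewise for the other term; adding the two estimates and applying Gronwall's inequality to $(D_1)_+^2+(D_2)_+^2$, which vanishes at $t=0$, yields $(D_i)_+\equiv0$, i.e. $W_i\le\overline{W}_i$. The main obstacle is the case $m<1$ or $n<1$, in which the source terms are merely H\"older continuous and the crude bound $(W_2^m-\overline{W}_2^m)_+\le(D_2)_+^m$ destroys the linear Gronwall argument (and indeed uniqueness for (\ref{3.1}) can fail). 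To circumvent this I would not compare with an arbitrary solution but with the \emph{maximal} solution of (\ref{3.1}); equivalently, pass to the limit $\eta\to0^+$ in the strictly perturbed system obtained by adding $\eta>0$ to each right-hand side of (\ref{3.1}), with data $W_i(0)+\eta$. Strict positivity of the perturbation propagates the strict ordering $W_i<\overline{W}_{i,\eta}$ forward in time, and continuity in $\eta$ recovers $W_i\le\overline{W}_i$ in the limit. Since the maximal solution still lies in $\mathcal{Q}$ and still vanishes in finite time by Lemmas \ref{le3.1} and \ref{le3.2}, the squeezing argument then goes through unchanged.
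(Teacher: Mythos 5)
Your proposal is correct and follows essentially the same route as the paper, which disposes of this corollary in a single line by calling it ``a direct consequence of Lemma \ref{le3.2} and the comparison argument'': comparing the solution of the differential inequalities (\ref{3.2}) with a solution of the equality system (\ref{3.1}) issued from the same data in $\mathcal{Q}$, then invoking the invariance of $\mathcal{Q}$ (Lemma \ref{le3.1}) and finite-time extinction (Lemma \ref{le3.2}). Your elaboration --- Gronwall in the Lipschitz case and, for $\min\{m,n\}<1$, comparison with the maximal solution of (\ref{3.1}) obtained via the $\eta$-perturbation, using the quasimonotone (cooperative) structure to propagate strict ordering --- is precisely the standard machinery implicit in the paper's phrase ``the comparison argument.''
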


The following theorem shows that any solution of (\ref{1.1}) vanishes in finite time
when the nonlinear sources are in some sense weak and when the initial data are ``comparable".

\begin{theorem}\label{thm3.1}
Assume that $(p-1)(q-1)<mn$.

(I)\ If $mn\leq1$ and the initial data $(u_0,v_0)$ satisfy, for some $0<\delta_1<1$, that
\begin{equation}\label{3.3}
\Big(\frac{b_1}{\delta_1a_1}\Big)^{\frac{1}{p-1}}\|v_0\|_{L^r(\Omega)}^{\frac{m}{p-1}}\leq \|u_0\|_{L^s(\Omega)}\leq\Big(\frac{\delta_1a_2}{b_2}\Big)^{\frac{1}{n}}\|v_0\|_{L^r(\Omega)}^{\frac{q-1}{n}},
\end{equation}
then every solution of (\ref{1.1}) vanishes in finite time;

(II)\ If $mn>1$ and the initial data $(u_0,v_0)$ satisfy, for some $0<\delta_2<1$, that
\begin{equation}\label{3.21}
\Big(\frac{b'_1}{\delta_2a'_1}\Big)^{\frac{1}{p-1}}\|v_0\|_{L^{r'}(\Omega)}^{\frac{m_1}{p-1}}\leq
\|u_0\|_{L^{s'}(\Omega)}\leq\Big(\frac{\delta_2a'_2}{b'_2}\Big)^{\frac{1}{n_1}}\|v_0\|_{L^{r'}(\Omega)}^{\frac{q-1}{n_1}},
\end{equation}
then every solution of (\ref{1.1}) vanishes in finite time for sufficiently small
initial data. Here $a_i, b_i, a'_i,b'_i>0(i=1,2)$, $s,r,s',r'>1$, $0<m_1<m$ and $0<n_1\leq n$ are constants to be defined in the process of the proof.
\end{theorem}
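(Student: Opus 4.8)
The plan is to reduce Problem (\ref{1.1}) to the differential inequalities handled by Corollary~\ref{co3.1}. I would set $W_1(t)=\|u(\cdot,t)\|_{L^{s}(\Omega)}$ and $W_2(t)=\|v(\cdot,t)\|_{L^{r}(\Omega)}$ for exponents $s,r>1$ to be chosen, and aim to show that $(W_1,W_2)$ obeys
\[
W_1'\leq -a_1W_1^{p-1}+b_1W_2^{m},\qquad W_2'\leq -a_2W_2^{q-1}+b_2W_1^{n}
\]
(with $m,n$ replaced by suitable $m_1<m$, $n_1\le n$ when $mn>1$), and that the comparability hypothesis (\ref{3.3}) (resp.\ (\ref{3.21})) is \emph{verbatim} the statement $(W_1(0),W_2(0))\in\mathcal{Q}$. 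Once this is in place, Corollary~\ref{co3.1} furnishes a finite $T^{*}$ with $W_1\equiv W_2\equiv0$ on $[T^{*},\infty)$, and since $u,v\ge0$ this gives extinction of $(u,v)$.

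First I would extract the dissipative term. Multiplying the first equation of (\ref{1.1}) by $u^{s-1}$ and integrating over $\Omega$ --- rigorously, performing the computation on the regularized solutions $(u_k,v_k)$ of (\ref{regularization}) and passing to the limit, which is legitimate since $u\in E_n$ is bounded and $u_t\in L^2(Q_{T_0})$ --- integration by parts converts the diffusion term into $-(s-1)\int_\Omega|\nabla u|^{p}u^{s-2}\,dx$. With $\alpha=(s+p-2)/p$ one has the pointwise identity $|\nabla u|^{p}u^{s-2}=\alpha^{-p}|\nabla(u^{\alpha})|^{p}$, and $\alpha\ge1$ as soon as $s\ge2$, so $u^{\alpha}\in W^{1,p}_0(\Omega)$ is admissible. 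Applying the Sobolev inequality $\|w\|_{L^{p^{*}}}\le C\|\nabla w\|_{L^{p}}$ to $w=u^{\alpha}$ and then the bounded-domain embedding $L^{\alpha p^{*}}(\Omega)\hookrightarrow L^{s}(\Omega)$, valid once $s$ is large enough that $\alpha p^{*}\ge s$, yields $\int_\Omega|\nabla(u^{\alpha})|^{p}\,dx\ge C\|u\|_{L^{s}}^{s+p-2}$. Multiplying the energy identity by $s$ and dividing by $sW_1^{s-1}$ then produces the dissipative term $-a_1W_1^{p-1}$; the second equation is treated identically to yield $-a_2W_2^{q-1}$.

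The heart of the matter is the source term $W_1^{1-s}\int_\Omega v^{m}u^{s-1}\,dx$, and here the two cases separate. \textbf{Case (I), $mn\le1$}: Hölder's inequality gives $\int_\Omega v^{m}u^{s-1}\,dx\le\|v\|_{L^{r}}^{m}\|u\|_{L^{(s-1)r/(r-m)}}^{s-1}$, and the elementary condition $(s-1)r/(r-m)\le s$, i.e.\ $r\ge ms$, lets me absorb the last factor into $\|u\|_{L^{s}}^{s-1}$ through the bounded-domain embedding; symmetrically the second equation asks $s\ge nr$. The pair $ms\le r$ and $nr\le s$ is simultaneously solvable \emph{precisely because} $mn\le1$, and (enlarging $s,r$ to respect $s\ge2$ and the Sobolev requirement $\alpha p^{*}\ge s$) this closes the estimate with constants $b_1,b_2$ independent of the solution. \textbf{Case (II), $mn>1$}: these constraints become incompatible, so I first lower the powers by $v^{m}\le\|v\|_{L^\infty}^{m-m_1}v^{m_1}$ and $u^{n}\le\|u\|_{L^\infty}^{n-n_1}u^{n_1}$ with $0<m_1<m$, $0<n_1\le n$, and then rerun Case (I) for $m_1,n_1$. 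One can pick $(p-1)(q-1)<m_1n_1\le1$ --- possible since $(p-1)(q-1)<1$ for $1<p,q<2$ --- so that Corollary~\ref{co3.1} still applies; the constants now carry factors $\|v\|_{L^\infty}^{m-m_1}$, $\|u\|_{L^\infty}^{n-n_1}$, controlled through the a priori bound $M_1$ of Step~1 of Proposition~\ref{existence}, which is why smallness of the data is imposed.

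Finally, I would note that (\ref{3.3}) and (\ref{3.21}) are exactly the membership conditions $(W_1(0),W_2(0))\in\mathcal{Q}$ for the respective constants and exponents, and invoke Corollary~\ref{co3.1}. I expect the genuine obstacle to be the source-term estimate: engineering exponents $s,r$ (and, in Case (II), $m_1,n_1$) so that the Hölder/embedding bound closes into \emph{exactly} the powers $W_2^{m}$ and $W_1^{n}$ demanded by $\mathcal{Q}$, while in Case (II) simultaneously keeping the solution-dependent constants small enough --- via small initial data --- that the datum still lies in the now-shrunken invariant region $\mathcal{Q}$.
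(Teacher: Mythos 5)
Your Case (I) is, up to the order in which H\"{o}lder and the bounded-domain embedding are applied, exactly the paper's argument: multiply by $u^{s-1}$, $v^{r-1}$, use $(s-1)|\nabla u|^{p}u^{s-2}=\frac{(s-1)p^{p}}{(s+p-2)^{p}}|\nabla u^{\frac{s+p-2}{p}}|^{p}$, Sobolev embedding to get the dissipative terms $-a_1W_1^{p-1}$, $-a_2W_2^{q-1}$, the constraint $ms\leq r\leq s/n$ (solvable precisely when $mn\leq1$) to close the source terms, and then Corollary \ref{co3.1}. The only cosmetic omission is the case $N=1$, where $p^{*}$ is unavailable and the paper substitutes $W^{1,p}_0(\Omega)\hookrightarrow L^{2}(\Omega)$.

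Case (II), however, has a genuine gap. You lower the exponents exactly as the paper does ($v^{m}\leq\|v\|_{\infty}^{m-m_1}v^{m_1}$ with $(p-1)(q-1)<m_1n_1\leq1$), but you propose to control $\|u\|_{\infty},\|v\|_{\infty}$ by the constant $M_1$ from Step 1 of Proposition \ref{existence}. That bound comes from comparison with the ODE system $U'=V^{m}$, $V'=U^{n}$, and since $mn>1$ this ODE blows up in finite time: the bound is valid only on a short interval $[0,T_0]$. Consequently your differential inequalities for $(W_1,W_2)$ hold only on $[0,T_0]$, while Corollary \ref{co3.1} needs them up to the extinction time $T^{*}$; nothing in your sketch shows $T^{*}\leq T_0$, and for $mn>1$ the solution could in principle cease to exist (blow up) before $T^{*}$. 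Closing this would require a quantitative balancing of $T_0$ against $T^{*}$ as the data shrink, which you have not supplied. The paper sidesteps the issue with a different mechanism, and this is the missing idea: since $mn>(p-1)(q-1)$ one can choose $l_1,l_2>0$ with $\frac{q-1}{n}<\frac{l_1}{l_2}<\frac{m}{p-1}$, and then the \emph{time-independent} pair $(k^{l_1}\psi_p,k^{l_2}\psi_q)$ is a supersolution for all small $k$, where $\psi_p,\psi_q$ solve $-\mathrm{div}(|\nabla\psi|^{p-2}\nabla\psi)=1$ (resp.\ with $q$) with \emph{positive} boundary value $\delta_0$; this positive lower bound is precisely what makes the weak comparison principle (Lemma \ref{comparison}) applicable despite the non-Lipschitz sources. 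Comparison then gives the global-in-time and explicitly small bound $(u,v)\leq(k^{l_1}M_p,k^{l_2}M_q)$, so the coefficients $b_1'\sim k^{l_2(m-m_1)}$, $b_2'\sim k^{l_1(n-n_1)}$ are uniform in $t$ and the ODE argument of Corollary \ref{co3.1} runs all the way to extinction.
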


\begin{proof}
As a matter of convenience, in what follows, we might as well assume that the weak solution is
appropriately smooth, or else, we can consider the corresponding regularized problem and through
an approximate process, the same result can also be obtained.

{\bf Case I: $mn\leq1$.} In this case, there exist constants $r,s>1$ such that $m\leq\frac{r}{s}\leq\frac{1}{n}$.
Multiplying the first equation of (\ref{1.1}) by $u^{s-1}$, the second equation by $v^{r-1}$ and
integrating the results over $\Omega$, one obtains

\begin{equation}\label{3.4}
\frac{1}{s}\frac{d}{dt}\int_\Omega
u^{s}dx+\frac{(s-1)p^p}{(s+p-2)^p}\int_\Omega|\nabla
u^{\frac{s+p-2}{p}}|^pdx=\int_\Omega v^{m}u^{s-1}dx,
\end{equation}

\begin{equation}\label{3.5}
\frac{1}{r}\frac{d}{dt}\int_\Omega
v^{r}dx+\frac{(r-1)q^q}{(r+q-2)^q}\int_\Omega|\nabla
v^{\frac{r+q-2}{q}}|^qdx=\int_\Omega u^{n}v^{r-1}dx.
\end{equation}
The proof of this case will be divided into two subcases.

Subcase 1: $N\geq2$. Since $p<N$, by choosing $s\geq\frac{N(2-p)}{p}$ (which implies $\frac{s+p-2}{p}\frac{Np}{N-p}\geq s$) and recalling
Sobolev embedding theorem \Big($W_0^{1,p}(\Omega)\hookrightarrow L^{p^*}(\Omega)(p^*=\frac{Np}{N-p})$\Big) and H\"{o}lder's inequality, we have

$$\|u\|_s^{\frac{s+p-2}{p}}\leq|\Omega|^{\frac{s+p-2}{sp}-\frac{1}{p^*}}\|u^{\frac{s+p-2}{p}}\|_{p^*}\leq|\Omega|^{\frac{s+p-2}{sp}-\frac{1}{p^*}}
\gamma_1\|\nabla u^{\frac{s+p-2}{p}}\|_p,$$
$$\int_{\Omega}v^mu^{s-1}\leq\|u\|_s^{s-1}\|v\|_{ms}^m\leq|\Omega|^{\frac{1}{s}-\frac{m}{r}}\|u\|_s^{s-1}\|v\|_{r}^m,$$
where $\gamma_1>0$ is the embedding constant. Substituting the above two inequalities into (\ref{3.4}) yields
\begin{equation}\label{3.6}
\frac{1}{s}\frac{d}{dt}\int_\Omega
u^{s}dx\leq-\frac{(s-1)p^p}{(s+p-2)^p}\gamma_1^{-p}|\Omega|^{-p(\frac{s+p-2}{sp}-\frac{1}{p^*})}
\|u\|_s^{s+p-2}+|\Omega|^{\frac{1}{s}-\frac{m}{r}}\|u\|_s^{s-1}\|v\|_{r}^m.
\end{equation}
Set $J_1(t)=\int_\Omega u^s(x,t)dx$, $J_2(t)=\int_\Omega v^r(x,t)dx$. Then (\ref{3.6}) can be rewritten as
\begin{equation}\label{3.7}
\frac{1}{s}J_1'(t)\leq-\frac{(s-1)p^p}{(s+p-2)^p}\gamma_1^{-p}|\Omega|^{-p(\frac{s+p-2}{sp}-\frac{1}{p^*})}
J_1^{\frac{s+p-2}{s}}+|\Omega|^{\frac{1}{s}-\frac{m}{r}}J_1^{\frac{s-1}{s}}J_{2}^{\frac{m}{r}}.
\end{equation}
Symmetrically, we have

\begin{equation}\label{3.8}
\frac{1}{r}J_2'(t)\leq-\frac{(r-1)q^q}{(r+q-2)^q}\gamma_2^{-q}|\Omega|^{-q(\frac{r+q-2}{rq}-\frac{1}{q^*})}
J_2^{\frac{r+q-2}{r}}+|\Omega|^{\frac{1}{r}-\frac{n}{s}}J_2^{\frac{r-1}{r}}J_{1}^{\frac{n}{s}},
\end{equation}
where $r>\max\{1,\frac{N(2-q)}{q}\}$, $q^*=\frac{Nq}{N-q}$ and $\gamma_2>0$ is the embedding constant. Set
\begin{equation*}
\begin{split}
&W_1(t)=J_1^{\frac{1}{s}}(t),\qquad \qquad \ W_2(t)=J_2^{\frac{1}{r}}(t),\\
&a_1=\frac{(s-1)p^p}{(s+p-2)^p}\gamma_1^{-p}|\Omega|^{-p(\frac{s+p-2}{sp}-\frac{1}{p^*})},\ \
b_1=|\Omega|^{\frac{1}{s}-\frac{m}{r}},\\
&a_2=\frac{(r-1)q^q}{(r+q-2)^q}\gamma_2^{-q}|\Omega|^{-q(\frac{r+q-2}{rq}-\frac{1}{q^*})},\ \
b_2=|\Omega|^{\frac{1}{r}-\frac{n}{s}}.
\end{split}
\end{equation*}
Then we can deduce from  (\ref{3.7}) and (\ref{3.8}) that
\begin{equation}\label{3.9}
\begin{cases}
W_1'(t)\leq-a_1W_1^{p-1}(t)+b_1W_2^m(t),\\
W_2'(t)\leq-a_2W_2^{q-1}(t)+b_2W_1^n(t).
\end{cases}
\end{equation}
Recalling (\ref{3.3}) and Corollary \ref{co3.1}, we know $(W_1(t),W_2(t))$ vanishes in finite time, and so does $(u,v)$.

Subcase 2: $N=1$. Since $1<p<2$, by choosing $s\geq2$ (which implies $\frac{2(s+p-2)}{p}\geq s$), and recalling
Sobolev embedding theorem ($W_0^{1,p}(\Omega)\hookrightarrow L^2(\Omega)$) and H\"{o}lder's inequality, we obtain

$$\|u\|_s^{\frac{s+p-2}{p}}\leq|\Omega|^{\frac{s+p-2}{sp}-\frac{1}{2}}\|u^{\frac{s+p-2}{p}}\|_2\leq\gamma_3|\Omega|^{\frac{s+p-2}{sp}-\frac{1}{2}}
\|\nabla u^{\frac{s+p-2}{p}}\|_p,$$
$$\int_{\Omega}v^mu^{s-1}\leq\|u\|_s^{s-1}\|v\|_{ms}^m\leq|\Omega|^{\frac{1}{s}-\frac{m}{r}}\|u\|_s^{s-1}\|v\|_{r}^m,$$
Symmetrically, one has for all $r\geq2$ that
$$\|v\|_r^{\frac{r+q-2}{q}}\leq|\Omega|^{\frac{r+q-2}{rq}-\frac{1}{2}}\|v^{\frac{r+q-2}{q}}\|_2\leq\gamma_4|\Omega|^{\frac{r+q-2}{rq}-\frac{1}{2}}
\|\nabla v^{\frac{r+q-2}{q}}\|_q,$$
$$\int_{\Omega}u^nv^{r-1}\leq\|v\|_r^{r-1}\|u\|_{nr}^n\leq|\Omega|^{\frac{1}{r}-\frac{n}{s}}\|v\|_r^{r-1}\|u\|_{s}^n,$$
Here $\gamma_3,\gamma_4>0$ are the embedding constants. By applying the foregoing arguments we can show that $(u,v)$ vanishes in
finite time.

{\bf Case II: $mn>1$.} Since $mn>(p-1)(q-1)$, there exist constants $l_1,l_2>0$ such that
$\frac{m}{p-1}>\frac{l_1}{l_2}>\frac{q-1}{n}$. For sufficiently small
$k>0$, it is easily verified that $(k^{l_1}\psi_p(x),
k^{l_2}\psi_q(x))$ is a supersolution of (\ref{1.1}) provided that $(u_0(x),
v_0(x))\leq(k^{l_1}\psi_p(x), k^{l_2}\psi_q(x))$, where
$\psi_p(x)$ and $\psi_q(x)$ are the unique positive solutions of the following
two elliptic problems, respectively,

\begin{equation}\label{3.22}
-\mathrm{div}(|\nabla\psi|^{p-2}\nabla\psi)=1,\ x\in\Omega,\ \
\psi(x)=\delta_0>0,\ x\in\partial\Omega;
\end{equation}
and
\begin{equation}\label{3.23}
-\mathrm{div}(|\nabla\psi|^{q-2}\nabla\psi)=1,\ x\in\Omega,\ \
\psi(x)=\delta_0>0,\ x\in\partial\Omega.
\end{equation}
Moreover, $\psi_p(x),\psi_q(x)\geq\delta_0$ for all $x\in\Omega$.
Thus, the application of Lemma \ref{comparison} guarantees that
$$(u(x,t),v(x,t))\leq(k^{l_1}\psi_p(x), k^{l_2}\psi_q(x))\leq (k^{l_1}M_p,k^{l_2}M_q),\qquad\qquad (B)$$
where $M_p=\|\psi_p\|_{L^\infty(\Omega)}$ and
$M_q=\|\psi_q\|_{L^\infty(\Omega)}$. With the help of (B) we obtain from (\ref{3.4})
and (\ref{3.5}) that
\begin{equation}\label{3.24}
\frac{1}{s}\frac{d}{dt}\int_\Omega
u^{s}dx+\frac{(s-1)p^p}{(s+p-2)^p}\int_\Omega|\nabla
u^{\frac{s+p-2}{p}}|^pdx\leq k^{l_2(m-m_1)}M_q^{m-m_1}\int_\Omega
v^{m_1}u^{s-1}dx,
\end{equation}
\begin{equation}\label{3.25}
\frac{1}{r}\frac{d}{dt}\int_\Omega
v^{r}dx+\frac{(r-1)q^q}{(r+q-2)^q}\int_\Omega|\nabla
v^{\frac{r+q-2}{q}}|^qdx\leq k^{l_1(n-n_1)}M_p^{n-n_1}\int_\Omega u^{n_1}v^{r-1}dx,
\end{equation}
where $0<m_1\leq m$, $0<n_1\leq n$ and $(p-1)(q-1)<m_1n_1\leq1$.
The remaining discussion will still be divided into two subcases.
For the subcases $N\geq2$, by applying the arguments similar to those in the proof of Case I
we arrive at
\begin{equation*}
\begin{cases}
W_1'(t)\leq-a'_1W_1^{p-1}(t)+b'_1W_2^{m_1}(t),\\
W_2'(t)\leq-a'_2W_2^{q-1}(t)+b'_2W_1^{n_1}(t),
\end{cases}
\end{equation*}
where $s',r'>1$ satisfying $m_1\leq\frac{r'}{s'}\leq\frac{1}{n_1}$ and
\begin{equation*}
\begin{split}
&W_1(t)=\Big(\int_\Omega u^{s'}(x,t)dx\Big)^{\frac{1}{s'}},\qquad\qquad W_2(t)=\Big(\int_\Omega v^{r'}(x,t)dx\Big)^{\frac{1}{r'}},\\
&a'_1=\frac{(s'-1)p^p}{(s'+p-2)^p}\gamma_1^{-p}|\Omega|^{-p(\frac{s'+p-2}{s'p}-\frac{1}{p^*})},\
\ b'_1=k^{l_2(m-m_1)}M_q^{m-m_1}|\Omega|^{\frac{1}{s'}-\frac{m_1}{r'}},\\
&a'_2=\frac{(r'-1)q^q}{(r'+q-2)^q}\gamma_2^{-q}|\Omega|^{-q(\frac{r'+q-2}{r'q}-\frac{1}{q^*})},\
\ b'_2=k^{l_1(n-n_1)}M_p^{n-n_1}|\Omega|^{\frac{1}{r'}-\frac{n_1}{s'}}.
\end{split}
\end{equation*}
Noticing $m_1n_1>(p-1)(q-1)$ and recalling (\ref{3.21}), we see by applying
Corollary \ref{co3.1} that $(W_1,W_2)$ vanishes in finite time and so does $(u,v)$.
The subcase $N=1$ can be treated similarly whose details are omitted. The proof is complete.
\end{proof}

In order to show whether the solutions of (\ref{1.1}) will vanish in finite time or not
for the case $(p-q)(q-1)=mn$, we first consider the following quasilinear elliptic problems
\begin{equation}\label{3.10}
-\mathrm{div}(|\nabla\varphi|^{p-2}\nabla\varphi)=1,\ x\in\Omega,\ \ \varphi(x)=0,\ x\in\partial\Omega,
\end{equation}
and
\begin{equation}\label{3.11}
-\mathrm{div}(|\nabla\varphi|^{q-2}\nabla\varphi)=1,\ x\in\Omega,\ \ \varphi(x)=0,\ x\in\partial\Omega,
\end{equation}
and denote by $\varphi_{p}$ and $\varphi_{q}$ the unique solutions
of (\ref{3.10}) and (\ref{3.11}),  respectively. It is well known
(and can be deduced by the strong maximum principle \cite{Vazquez84})
that  $\varphi_{p}(x),\varphi_{q}(x)>0$ in $\Omega$.  Moreover, by the
standard De Giorgi iteration process (see \cite{Dibenedetto93}) we know that there exist
positive constants $M_{p}=M_p(\Omega)$, $M_{q}=M_q(\Omega)$
such that $M_{p}=\|\varphi_{p}\|_{L^\infty(\Omega)}\leq C_1|\Omega|^\alpha$ and
$M_{q}=\|\varphi_{q}\|_{L^\infty(\Omega)}\leq C_2|\Omega|^\beta$, where
$C_1,C_2,\alpha,\beta$ are positive constants depending only on $N,p$ and $q$.
In particular, $M_p,M_q\rightarrow 0$ as
$|\Omega|\rightarrow 0$. The comparison principles for (\ref{3.10}) and (\ref{3.11})
also imply that $M_p(\Omega)$ and $M_q(\Omega)$ are monotonic increasing with respect to $\Omega$ in the sense of
set inclusion relation, namely $M_p(\Omega_1)\leq M_p(\Omega_2)$ and $M_q(\Omega_1)\leq M_q(\Omega_2)$ if $\Omega_1\subset \Omega_2$.

\begin{theorem}\label{thm3.3}
Assume that $(p-1)(q-1)=mn$ and $|\Omega|$ is suitably small.
Then there exists a solution of (\ref{1.1}) vanishing in finite time for suitably
small initial data.
\end{theorem}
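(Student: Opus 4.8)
The plan is to exhibit a single extinction solution by trapping it, via the comparison principle of Lemma \ref{comparison}, beneath a separable supersolution whose temporal factors are governed by an ODE system of the type (\ref{3.1}), to which Lemmas \ref{le3.1} and \ref{le3.2} then apply. Fix a small parameter $\delta_0>0$ and set $\psi_p:=\varphi_p+\delta_0$ and $\psi_q:=\varphi_q+\delta_0$. Since $\nabla\psi_p=\nabla\varphi_p$, these solve the elliptic problems (\ref{3.22}) and (\ref{3.23}) with boundary datum $\delta_0$, and they satisfy $\delta_0\le\psi_p\le M_p+\delta_0$ and $\delta_0\le\psi_q\le M_q+\delta_0$ in $\Omega$. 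I would look for a supersolution of the form $(\overline u,\overline v)=(f(t)\psi_p(x),\,g(t)\psi_q(x))$ with $f,g\ge0$ nonincreasing and vanishing at some finite $T$.

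Using $-\mathrm{div}(|\nabla\psi_p|^{p-2}\nabla\psi_p)=1$, a direct computation gives $\overline u_t-\mathrm{div}(|\nabla\overline u|^{p-2}\nabla\overline u)=f'\psi_p+f^{p-1}$, so the first supersolution inequality reads $f'\psi_p+f^{p-1}\ge g^m\psi_q^m$ pointwise in $\Omega$. Because I will arrange $f'\le0$, the bounds $f'\psi_p\ge f'(M_p+\delta_0)$ and $g^m\psi_q^m\le g^m(M_q+\delta_0)^m$ show it suffices that $f'\ge -a_1f^{p-1}+b_1g^m$ with $a_1=(M_p+\delta_0)^{-1}$ and $b_1=(M_q+\delta_0)^m(M_p+\delta_0)^{-1}$; symmetrically $g'\ge -a_2g^{q-1}+b_2f^n$ with $a_2=(M_q+\delta_0)^{-1}$ and $b_2=(M_p+\delta_0)^n(M_q+\delta_0)^{-1}$. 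Taking $(f,g)$ to solve (\ref{3.1}) with equality for these coefficients then yields a genuine supersolution, and inside the region $\mathcal Q$ one indeed has $f',g'\le0$ (from $a_1f^{p-1}\ge \delta^{-1}b_1g^m\ge b_1g^m$ when $\delta\in(0,1)$), which justifies the worst-case bounds a posteriori.

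The decisive point is the non-emptiness of $\mathcal Q$ in the critical case. Here $m/(p-1)=(q-1)/n=:\theta$, so the two bounding curves of $\mathcal Q$ degenerate into the rays $W_1=LW_2^{\theta}$ and $W_1=RW_2^{\theta}$ with $L=(b_1/(\delta a_1))^{1/(p-1)}$ and $R=(\delta a_2/b_2)^{1/n}$, and $\mathcal Q$ is a nontrivial cone precisely when $L\le R$. Substituting the coefficients above, this reduces to $(M_p+\delta_0)(M_q+\delta_0)^{\theta}\le\delta^{1/n+1/(p-1)}$. This is exactly where the hypotheses are used: the De Giorgi bounds $M_p\le C_1|\Omega|^{\alpha}$ and $M_q\le C_2|\Omega|^{\beta}$ give $M_p,M_q\to0$ as $|\Omega|\to0$, so for a fixed $\delta\in(0,1)$ and for $\delta_0$ and $|\Omega|$ small enough the left-hand side is as small as we please and the inequality holds; hence $\mathcal Q\neq\varnothing$.

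With $\mathcal Q$ nonempty I would pick $(f(0),g(0))\in\mathcal Q$ small, whereupon Lemma \ref{le3.2} furnishes a finite $T$ with $f(T)=g(T)=0$ and $f,g>0$ on $[0,T)$. Choosing nonnegative initial data with $u_0\le f(0)\psi_p$ and $v_0\le g(0)\psi_q$, which can be arranged for suitably small and comparable data, Proposition \ref{existence} provides a weak solution; on each slab $Q_{T-\epsilon}$ the supersolution satisfies $\overline u,\overline v\ge\min\{f(T-\epsilon),g(T-\epsilon)\}\,\delta_0>0$, so Lemma \ref{comparison} applies and gives $(u,v)\le(\overline u,\overline v)$ there, and letting $\epsilon\to0$ forces $u(\cdot,T)=v(\cdot,T)=0$. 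I expect the main obstacle to be precisely the verification that $\mathcal Q$ is a genuine cone, i.e. the coefficient inequality $L\le R$, since this is where the smallness of $|\Omega|$ must be quantified through the De Giorgi estimates; a secondary technical point is continuing the local solution of Proposition \ref{existence} up to time $T$, which follows from the uniform bound $(u,v)\le(\overline u,\overline v)$.
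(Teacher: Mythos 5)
Your proposal is correct and takes essentially the same route as the paper: a separable supersolution whose spatial profiles solve the torsion problems and whose temporal factors solve the ODE system (\ref{3.1}) with coefficients given by sup-norms of the profiles, non-emptiness of the invariant cone $\mathcal{Q}$ in the critical case reduced to a coefficient inequality that holds by the De Giorgi bounds when $|\Omega|$ is small, and extinction transferred to $(u,v)$ by applying Lemma \ref{comparison} on slabs where the supersolution stays above a positive constant. The only (cosmetic) difference is the device used to get that positive lower bound: you lift the boundary datum, taking $\psi_p=\varphi_p+\delta_0$ on $\Omega$ itself, whereas the paper solves the elliptic problems with zero boundary data on a slightly larger domain $\Omega_0\supset\supset\Omega$; both serve the identical purpose, and your explicit verification of the cone condition $L\le R$ is, if anything, more careful than the paper's.
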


\begin{proof}
We shall prove this theorem by constructing a proper supersolution. Set
\begin{equation}\label{3.12}
\bar{u}(x,t)=g_1(t)\varphi_{p0}(x),\ \ \bar{v}(x,t)=g_2(t)\varphi_{q0}(x),
\end{equation}
where $g_1(t),g_2(t)$ are two smooth nonincreasing functions to be determined and $\varphi_{p0}$,
$\varphi_{q0}$ are the unique positive solutions of (\ref{3.10}) and (\ref{3.11}) with $\Omega$ replaced by
some smooth domain $\Omega_0$ satisfying $\Omega\subset\subset\Omega_0$, respectively.
Denote $M_{p0}=\|\varphi_{p0}\|_{L^\infty(\Omega_0)}$, $M_{q0}=\|\varphi_{q0}\|_{L^\infty(\Omega_0)}$
and $\sigma=\min\{\min\limits_{x\in\overline{\Omega}}\varphi_{p0}(x),\min\limits_{x\in\overline{\Omega}}\varphi_{q0}(x)\}>0$.

Recalling $g'_1(t),g'_2(t)\leq0$, we can show by direct calculation that $(\overline{u},\overline{v})$ satisfies (in the weak sense)
the following
\begin{eqnarray}\label{3.13}
&&\bar{u}_t-\mathrm{div}(|\nabla\bar{u}|^{p-2}\nabla\bar{u})-\bar{v}^m\nonumber\\
&=&g_1'(t)\varphi_{p0}-g_1^{p-1}(t)\mathrm{div}(|\nabla\varphi_{p0}|^{p-2}\nabla\varphi_{p0})-g_2^m(t)\varphi_{q0}^m\nonumber\\
&=&g_1'(t)\varphi_{p0}+g_1^{p-1}(t)-g_2^m(t)\varphi_{q0}^m\nonumber\\
&\geq&M_{p0}g_1'(t)+g_1^{p-1}(t)-M^m_{q0}g_2^m(t).
\end{eqnarray}
Similarly, we have
\begin{equation}\label{3.14}
\bar{v}_t-\mathrm{div}(|\nabla\bar{v}|^{q-2}\nabla\bar{v})-\bar{u}^n\geq M_{q0}g_2'(t)+g_2^{q-1}(t)-M^n_{p0}g_1^n(t).
\end{equation}
Suppose $\Omega$ is suitably small such that $M_p,M_q<1$. Then by the continuity of the
solutions of Problem (\ref{3.10}) and (\ref{3.11}) with respect to $\Omega$ it is known that
we can choose a suitable smooth domain
$\Omega_0$ fulfilling $\Omega\subset\subset\Omega_0$ such that $M_{p0},M_{q0}<1$.

Let $(g_1(t),g_2(t))$ be the positive solution of the following ordinary
differential equations
\begin{equation}\label{3.15}
\begin{cases}
g_1'(t)=-\frac{1}{M_{p0}}g_1^{p-1}(t)+\frac{M^m_{q0}}{M_{p0}}g_2^m(t),\\
g_2'(t)=-\frac{1}{M_{q0}}g_2^{q-1}(t)+\frac{M^n_{p0}}{M_{q0}}g_1^n(t),\\
\Big(\frac{M^m_{q0}}{\delta}g_2^m(0)\Big)^{\frac{1}{p-1}}\leq g_1(0)\leq\Big(\frac{\delta}{M^n_{p0}}g_2^{q-1}(0)\Big)^{\frac{1}{n}},
\end{cases}
\end{equation}
where $\delta>0$ satisfying $M^m_{q0}M^n_{p0}<\delta^2<1$. By Corollary \ref{co3.1}, we know that $(g_1(t), g_2(t))$ vanishes at some finite time $T_0>0$.

By combining (\ref{3.13}), (\ref{3.14}) with (\ref{3.15}) we know that if $(u_0,\ v_0)$ is sufficiently small such that $u_0(x)\leq g_1(0)\varphi_{p0}(x),\ v_0(x)\leq g_2(0)\varphi_{q0}(x)$ in $\Omega$, then $(\bar{u}, \bar{v})$ defined in (\ref{3.12}) is a supersolution of (\ref{1.1}) which vanishes at $T_0$.
For any fixed $0<T<T_0$, there exist two positive constants $C_1$ and $C_2$ such that $C_1\leq\bar{u},\bar{v}\leq C_2$ on $\overline{\Omega}\times[0,T]$.
Let $(u,v)$ be a solution of Problem (\ref{1.1}), then by the comparison principle (Lemma \ref{comparison}) we know that
$(u(x,t),v(x,t))\leq(\bar{u}(x,t),\bar{v}(x,t))$ for any $(x,t)\in \Omega\times[0,T]$.
By the arbitrariness of $T<T_0$, we see that $u(x,T_1)=v(x,T_1)\equiv0$ for some $T_1\leq T_0$.
If we take $u(x,t)=v(x,t)\equiv0$ for all $t\geq T_1$, then $(u(x,t),v(x,t))$ vanishes at the finite time
$T_1$ and clearly it is a weak solution of Problem (\ref{1.1}). The proof is complete.
\end{proof}

\begin{remark}
The methods used in this paper can also be applied to deal with systems (\ref{1.1}) with nonlocal sources,
that is with $v^m$ and $u^n$ replaced by $\int_\Omega v^m(y,t)dy$ and $\int_\Omega u^n(y,t)dy$, respectively,
and the results are almost the same as the ones obtained above. Interested readers may check it themselves.
\end{remark}

To give some sufficient conditions for the non-extinction of solutions to systems like (\ref{1.1}) is much
more challenging and there is no result except some partial answer obtained in \cite{Friedman92} for a system of
semilinear parabolic variational inequalities. In the last part of this section,
we will derive some non-extinction criteria for solutions to (\ref{1.1}) in some special cases.
Our result shows that when the nonlinear sources are in some sense strong,
Problem (\ref{1.1}) admits at least one non-extinction solution for any positive smooth initial data.

\begin{theorem}\label{thm3.3}
Assume that $1<p=q<2$, $0<m,n\leq p-1$ and $mn<(p-1)^2$. Then Problem (\ref{1.1})
admits at least one non-extinction solution for any smooth positive initial data $(u_0,v_0)$.
\end{theorem}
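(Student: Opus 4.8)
The plan is to prove non-extinction by producing \emph{one} solution that remains bounded below, for all time, by a fixed positive stationary subsolution built from the first eigenfunction of the $p$-Laplacian. Let $(\lambda_1,\phi_1)$ be the first Dirichlet eigenpair of $-\mathrm{div}(|\nabla\phi|^{p-2}\nabla\phi)=\lambda_1\phi^{p-1}$ on $\Omega$, normalized so that $\phi_1>0$ in $\Omega$, $\phi_1=0$ on $\partial\Omega$, and set $K:=\|\phi_1\|_{L^\infty(\Omega)}$. Since $p=q$, I would look for a time-independent subsolution of the form $\underline u=c_1\phi_1$, $\underline v=c_2\phi_1$ with small constants $c_1,c_2>0$. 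Because the sources are ``strong'' ($mn<(p-1)^2$), such a subsolution should exist and can never vanish, which yields non-extinction as soon as the constructed solution is shown to dominate it.

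First I would verify the subsolution inequalities. Using $-\mathrm{div}(|\nabla\phi_1|^{p-2}\nabla\phi_1)=\lambda_1\phi_1^{p-1}$, and writing $s=\phi_1(x)\in(0,K]$, one computes
$$\underline u_t-\mathrm{div}(|\nabla\underline u|^{p-2}\nabla\underline u)-\underline v^{\,m}=\lambda_1c_1^{p-1}s^{p-1}-c_2^{m}s^{m},$$
so the first component is a subsolution exactly when $\lambda_1c_1^{p-1}s^{p-1}\le c_2^{m}s^{m}$ for all $s\in(0,K]$. Since $0<m\le p-1<1$, the factor $s^{m-(p-1)}$ is non-increasing, so this reduces to the single inequality $\lambda_1K^{(p-1)-m}c_1^{p-1}\le c_2^{m}$, and symmetrically $\lambda_1K^{(p-1)-n}c_2^{p-1}\le c_1^{n}$. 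Writing these as $c_2\ge A\,c_1^{(p-1)/m}$ and $c_2\le B\,c_1^{n/(p-1)}$ for explicit positive $A,B$, the two are compatible iff $c_1^{\frac{(p-1)^2-mn}{m(p-1)}}\le B/A$; since $mn<(p-1)^2$ the exponent is positive, so the left side tends to $0$ as $c_1\to0^+$ and both inequalities hold for all sufficiently small $c_1$, with $c_2$ in the resulting small admissible interval. Here $m,n\le p-1$ is precisely what renders the boundary values $s\to0$ harmless (it forces the binding constraint to sit at $s=K$), while $mn<(p-1)^2$ is exactly what makes the two coupled constraints simultaneously solvable. Shrinking $c_1,c_2$ further and using $\phi_1=0$ on $\partial\Omega$ together with the positivity of the smooth data in $\Omega$, I can also arrange $c_1\phi_1\le u_0$ and $c_2\phi_1\le v_0$; being constant in time and positive in $\Omega$, this $(\underline u,\underline v)$ plainly does not vanish in finite time.

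The hard part is to produce a genuine solution dominating this subsolution. One cannot simply invoke Lemma~\ref{comparison}: there the supersolution must have a positive lower bound, whereas the natural supersolution here is the solution itself, which vanishes on $\partial\Omega$; and since $m,n<1$ the sources are non-Lipschitz and uniqueness fails, so no comparison can hold for an \emph{arbitrary} solution. Instead I would build the particular non-extinction solution by monotone approximation, in the spirit of Proposition~\ref{existence}: run the regularized nondegenerate problems (\ref{regularization}) from initial data dominating $c_1\phi_1,c_2\phi_1$, and carry out a monotone sub--super iteration between $(\underline u,\underline v)$ and the spatially constant supersolution $(U,V)$ given by the ODE system (\ref{ODE}) in Step~1 of that proof, on each interval on which $(U,V)$ exists. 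For the regularized problems the diffusion is uniformly parabolic and the classical maximum principle applies; after a further Lipschitz-regularization of the sources $v^m,u^n$ (removed in the limit) the iteration preserves the ordering $\underline u\le u_k$, $\underline v\le v_k$. Passing to the limit in the regularization parameters as in Proposition~\ref{existence} yields a weak solution $(u,v)$ with $u\ge c_1\phi_1$, $v\ge c_2\phi_1$, and since the stationary subsolution is valid for all $t$ this lower bound persists on the whole existence interval, so $u(\cdot,t),v(\cdot,t)\not\equiv0$ for every finite $t$, which is the asserted non-extinction. I expect the delicate point to be exactly this domination step—keeping the approximate solutions above a subsolution that touches zero on $\partial\Omega$ while the sources are non-Lipschitz—and the clean way around it is to avoid comparison for a general solution and instead realize the specific solution as an ordered limit of the regularized scheme.
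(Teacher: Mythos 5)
Your Step--1 construction is sound and coincides with the paper's: your $(c_1\phi_1,c_2\phi_1)$ is the paper's $(k^{\theta_1}\phi_1,k^{\theta_2}\phi_1)$ with $\frac{m}{p-1}<\frac{\theta_1}{\theta_2}<\frac{p-1}{n}$, your reduction of the subsolution inequality to the point $s=\|\phi_1\|_\infty$ uses $m,n\le p-1$ exactly as the paper uses $\phi_1^{p-1-m}\le 1$, and your compatibility condition on $(c_1,c_2)$ is the same use of $mn<(p-1)^2$. Your diagnosis that Lemma~\ref{comparison} cannot be invoked (no positive lower bound, non-Lipschitz sources) is also exactly right. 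The gap is in the domination step. You propose to keep the approximations above $(\underline u,\underline v)$ by applying the classical maximum principle to the uniformly parabolic regularized problems (\ref{regularization}). But $(\underline u,\underline v)$ is a weak subsolution for the \emph{singular} operator $\mathrm{div}(|\nabla\cdot|^{p-2}\nabla\cdot)$, not for the regularized operator $\mathrm{div}\big((|\nabla\cdot|^2+\varepsilon_k)^{\frac{p-2}{2}}\nabla\cdot\big)$: for $1<p<2$ the two flux fields differ, the discrepancy is the divergence of a nonzero vector field with no sign, and it cannot be absorbed into the subsolution inequality because near $\partial\Omega$ that inequality has no slack (both sides vanish with $\phi_1$). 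So the classical maximum principle is being invoked across two different operators, and the asserted ordering $\underline u\le u_k$, $\underline v\le v_k$ is precisely the unproved step. A second, related omission: the direction of your ``Lipschitz-regularization of the sources'' matters. To preserve the subsolution property you must regularize from above (e.g.\ replace $s^m$ by $(s+\delta)^m\ge s^m$); but then the ODE barrier $(U,V)$ with $U'=V^m$, $V'=U^n$ is no longer a supersolution of the regularized system and must itself be modified. The proposal leaves both points unresolved, although it flags them as ``the delicate point.''

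The paper resolves this without ever comparing across operators. Its upper barrier is the solution $(\overline u,\overline v)$ of the auxiliary system with sources $(v_++1)^m$ and $(u_++1)^n$: these dominate $v^m$, $u^n$ \emph{and} are globally Lipschitz on $\{s\ge0\}$ with constants $m$, $n$, so the ordering $(\underline u,\underline v)\le(\overline u,\overline v)$ follows from the same energy/Gronwall argument as in Lemma~\ref{comparison}, run for the singular operator, with Lipschitzness of the shifted sources replacing the positive-lower-bound hypothesis. The actual solution is then produced by the monotone iteration (\ref{3.23}), starting from $(u_1,v_1)=(\underline u,\underline v)$, where every comparison needed for monotonicity and for the bound by $(\overline u,\overline v)$ is a comparison for a \emph{scalar} $p$-Laplace equation with a frozen right-hand side, for which monotonicity of the operator suffices and no Lipschitz condition on the source is needed. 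Your ODE-barrier idea can in fact be salvaged along these lines: if you run the monotone iteration on the singular problem with frozen sources, then $(\underline u,\underline v)\le(U,V)$ is immediate (since $U(t)\ge U(0)=\|u_0\|_{L^\infty(\Omega)}\ge c_1\|\phi_1\|_{L^\infty(\Omega)}$, and $mn<1$ makes $(U,V)$ global), and the iteration stays trapped between the two barriers; this would even simplify the paper's Steps 2--3. But as written---with the ordering enforced at the level of the $\varepsilon$-regularized problems via the classical maximum principle---the argument does not close.
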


\begin{proof}
We will prove this theorem by constructing a pair of ordered super and subsolution and utilizing
the monotonic iteration process. The whole process is divided into four steps.

{\bf Step 1.} We first construct a non-extinction subsolution of (\ref{1.1}). For this,
denote by $\lambda_1>0$ be the first eigenvalue of the following eigenvalue problem
\begin{equation}\label{3.16}
-\mathrm{div}(|\nabla\phi|^{p-2}\nabla\phi)=\lambda|\phi(x)|^{p-2}\phi(x),\ x\in\Omega,\ \ \phi(x)=0,\ x\in\partial\Omega,
\end{equation}
and by $\phi_1(x)$ the first eigenfunction. We may choose $\phi_1(x)>0$ in $\Omega$ and normalize it with $\|\phi_1\|_{L^\infty(\Omega)}=1$.

Since $mn<(p-1)^2$, there exists two positive constants $\theta_1,\theta_2$ such that
\begin{equation}\label{3.17}
\frac{m}{p-1}<\frac{\theta_1}{\theta_2}<\frac{p-1}{n}.
\end{equation}
Define $\underline{u}=k^{\theta_1}\phi_1(x)$, $\underline{v}=k^{\theta_2}\phi_1(x)$. Recalling $0<m,n\leq p-1$, by direct computation we see that
$(\underline{u},\underline{v})$ satisfies (in the weak sense)
\begin{eqnarray}\label{3.18}
&&\underline{u}_t-\mathrm{div}(|\nabla \underline{u}|^{p-2}\nabla \underline{u})-\underline{v}^m
=\lambda_1k^{\theta_1(p-1)}\phi_1^{p-1}-k^{m\theta_2}\phi_1^{m}\nonumber\\
&=&\phi_1^{m}(\lambda_1k^{\theta_1(p-1)}\phi_1^{p-1-m}-k^{m\theta_2})\leq\phi_1^{m}(\lambda_1k^{\theta_1(p-1)}-k^{m\theta_2}),
\end{eqnarray}
and
\begin{eqnarray}\label{3.19}
&&\underline{v}_t-\mathrm{div}(|\nabla \underline{v}|^{p-2}\nabla \underline{v})-\underline{u}^n
=\lambda_1k^{\theta_2(p-1)}\phi_1^{p-1}-k^{n\theta_1}\phi_1^{n}\nonumber\\
&=&\phi_1^{n}(\lambda_1k^{\theta_2(p-1)}\phi_1^{p-1-n}-k^{n\theta_1})\leq\phi_1^{n}(\lambda_1k^{\theta_2(p-1)}-k^{n\theta_1}).
\end{eqnarray}
Combining (\ref{3.17}), (\ref{3.18}) with (\ref{3.19}) we know that there exists a constant $k_1>0$ such that
for all $k\in(0,k_1]$, the following relations hold
\begin{equation}\label{3.20}
\begin{cases}
\underline{u}_t-\mathrm{div}(|\nabla \underline{u}|^{p-2}\nabla \underline{u})-\underline{v}^m\leq0,&x\in\Omega,\ t>0,\\
\underline{v}_t-\mathrm{div}(|\nabla \underline{v}|^{p-2}\nabla \underline{v})-\underline{u}^n\leq0,&x\in\Omega,\ t>0.
\end{cases}
\end{equation}
On the other hand, for any $u_0,v_0\in C^1(\overline{\Omega})$ satisfying $u_0,v_0>0$ in $\Omega$, $u_0,v_0=0$,
$\frac{\partial u_0}{\partial\nu}<0$, $\frac{\partial v_0}{\partial\nu}<0$ on
$\partial\Omega$, there exists a constant $k_2>0$ such that for all $k\in(0,k_2]$ we have
\begin{equation}\label{3.21}
u_0(x)\geq k^{\theta_1}\phi_1(x),\ \ v_0(x)\geq k^{\theta_2}\phi_1(x),\ \ x\in\Omega.
\end{equation}
From (\ref{3.20}) and (\ref{3.21}) we know that $(\underline{u},\underline{v})$ is a non-extinction weak subsolution
of (\ref{1.1}) for all $0<k\leq\min\{k_1,k_2\}$.

{\bf Step 2.} To construct a supsolution of (\ref{1.1}), let us consider the following auxiliary system
\begin{equation}\label{3.22}
\begin{cases}
u_t=\mathrm{div}(|\nabla u|^{p-2}\nabla u)+(v_++1)^m,&\ x\in\Omega,\ t>0,\\
v_t=\mathrm{div}(|\nabla v|^{p-2}\nabla v)+(u_++1)^n,&\ x\in\Omega,\ t>0,\\
u(x,t)=v(x,t)=0,&x\in\partial\Omega,\ t>0,\\
u(x,0)=u_0(x),\ v(x,0)=v_0(x),&x\in\Omega.
\end{cases}
\end{equation}
By applying the arguments
similar to those in the proof of Proposition \ref{existence} we know that Problem (\ref{3.22})
admits a weak solution $(\overline{u},\overline{v})$. By the weak maximum principle it is known
that $(\overline{u},\overline{v})$ is nonnegative. Moreover, $(\overline{u},\overline{v})$
exists globally and is locally bounded if $mn\leq1$. If we can show that $(\underline{u},\underline{v})\leq(\overline{u},\overline{v})$,
then there exists a solution $(u,v)$ of (\ref{1.1}) satisfying $(\underline{u},\underline{v})\leq(u,v)\leq(\overline{u},\overline{v})$.

{\bf Step 3.} Fix $T\in(0,\infty)$. From the definition of weak super and subsolutions, we obtain, for any $0\leq\phi_1\in E_{p0}$ and $0\leq\phi_2\in E_{q0}$,
\begin{eqnarray*}
&&\iint_{Q_T}\Big(\frac{\partial \underline{u}}{\partial t}-\frac{\partial \overline{u}}{\partial t}\Big)\phi_1dxd\tau
+\iint_{Q_T}(|\nabla \underline{u}|^{p-2}\nabla \underline{u}-|\nabla \overline{u}|^{p-2}\nabla \overline{u})\nabla\phi_1dxd\tau\\
&&\leq\iint_{Q_T}(\underline{v}^m-(\overline{v}_++1)^m)\phi_1dxd\tau,\\
&&\iint_{Q_T}\Big(\frac{\partial \underline{v}}{\partial t}-\frac{\partial \overline{v}}{\partial t}\Big)\phi_2dxd\tau
+\iint_{Q_T}(|\nabla \underline{v}|^{p-2}\nabla \underline{v}-|\nabla \overline{v}|^{p-2}\nabla \overline{v})\nabla\phi_2dxd\tau\\
&&\leq\iint_{Q_T}(\underline{u}^n-(\overline{u}_++1)^n)\phi_2dxd\tau.
\end{eqnarray*}
Noticing $0<m,n<1$ and choosing $\phi_1=\chi_{[0,t]}(\underline{u}-\overline{u})_+$, $\phi_2=\chi_{[0,t]}(\underline{v}-\overline{v})_+$ for any $t\in(0,T)$,
we have
\begin{eqnarray*}
&&\iint_{Q_t}\Big(\frac{\partial \underline{u}}{\partial t}-\frac{\partial \overline{u}}{\partial t}\Big)(\underline{u}-\overline{u})_+dxd\tau
+\iint_{Q_t}(|\nabla \underline{u}|^{p-2}\nabla \underline{u}-|\nabla \overline{u}|^{p-2}\nabla \overline{u})\nabla(\underline{u}-\overline{u})_+dxd\tau\\
&&\leq\iint_{Q_t}(\underline{v}^m-(\overline{v}_++1)^m)(\underline{u}-\overline{u})_+dxd\tau\leq m\iint_{Q_t}(\underline{v}-\overline{v})_+(\underline{u}-\overline{u})_+dxd\tau,
\end{eqnarray*}
which implies
\begin{eqnarray*}
&&\int_{\Omega}(\underline{u}-\overline{u})^2_+dx+2\iint_{Q_t}(|\nabla \underline{u}|^{p-2}\nabla \underline{u}-|\nabla \overline{u}|^{p-2}\nabla \overline{u})\nabla(\underline{u}-\overline{u})_+dxd\tau\nonumber\\
&\leq& 2m\iint_{Q_t}(\underline{v}-\overline{v})_+(\underline{u}-\overline{u})_+dxd\tau.
\end{eqnarray*}
Symmetrically, we have
\begin{eqnarray*}
&&\int_{\Omega}(\underline{v}-\overline{v})^2_+dx+2\iint_{Q_t}(|\nabla \underline{v}|^{p-2}\nabla \underline{v}-|\nabla \overline{v}|^{p-2}\nabla \overline{v})\nabla(\underline{v}-\overline{v})_+dxd\tau\nonumber\\
&\leq& 2n\iint_{Q_t}(\underline{v}-\overline{v})_+(\underline{u}-\overline{u})_+dxd\tau.
\end{eqnarray*}
By the monotonicity of $p$-Laplace operator and Gronwall's inequality we have $(\underline{u},\underline{v})\leq(\overline{u},\overline{v})$.

{\bf Step 4.} Define $(u_1,v_1)=(\underline{u},\underline{v})$ and $\{(u_k,v_k)\}_{k\geq2}$
iteratively to be a solution of the following problem
\begin{equation}\label{3.23}
\begin{cases}
u_{kt}=\mathrm{div}(|\nabla u_k|^{p-2}\nabla u_k)+v_{k-1}^m,&\ x\in\Omega,\ t>0,\\
v_{kt}=\mathrm{div}(|\nabla v_k|^{p-2}\nabla v_k)+u_{k-1}^n,&\ x\in\Omega,\ t>0,\\
u(x,t)=v(x,t)=0,&x\in\partial\Omega,\ t>0,\\
u(x,0)=u_0(x),\ v(x,0)=v_0(x),&x\in\Omega.
\end{cases}
\end{equation}
By induction we can prove that $(u_k,v_k)\leq(u_{k+1},v_{k+1})$ and $(u_k,v_k)\leq(\overline{u},\overline{v})$ for all $k\geq1$.
Thus the limits $u(x,t)=\lim_{k\rightarrow\infty}u_k(x,t)$ and $v(x,t)=\lim_{k\rightarrow\infty}v_k(x,t)$ exist for every $x\in\Omega$
and $t>0$ and it is not hard to show that $(u,v)$ is a weak solution of (\ref{1.1}) by the regularities of $\{(u_k,v_k)\}_{k\geq2}$.
Therefore, $(u,v)$ is a non-extinction solution of (\ref{1.1}) since $(u,v)\geq(\underline{u},\underline{v})$. The proof is complete.
\end{proof}

\begin{remark}
Implied by the results of scalar problems (see \cite{Tian08,Yin07}) we conjecture that
(\ref{1.1}) should admit at least one non-extinction solution for any nonnegative initial data $(u_0,v_0)$
when $1<p,q<2$ and $mn<(p-1)(q-1)$.
\end{remark}


\begin{thebibliography}{xx}

\bibitem{Berryman80}
J.~G.~Berryman,~C.~J.~Holland,~Stability of the separable solution
for fast diffusion,~Arch. Ration.~Mech.~Anal.,~74(1980),\ 379-388.

\bibitem{Chen-Y13}
Y~J~Chen,~J~Wang,~H~X~Zhang,~Extinction for a couple of fast diffusion systems with
nonlinear sources,~Nonl.~Anal.,~14(2013),\ 1931-1937.

\bibitem{Dibenedetto93}
E.~Dibenedetto,~Degenerate Parabolic Equations,~Springer,~New
York,~1993.

\bibitem{Ferreira01}
R.~Ferreira,~J.~L.~Vazquez,~Extinction behavior for fast diffusion
equations with absorption,~Nonlinear Anal.,~43(2001),\ 943-985.

\bibitem{Friedman87}
A.~Friedman,~M.~A.~Herrero,~Extinction properties of semilinear
heat equations with strong absorption,~J.~Math.~Anal.~Appl.,~124
(1987),\ 530-546.

\bibitem{Friedman92}
A.~Friedman,~M.~A.~Herrero,~Extinction and positivity for a system of semilinear parabolic variational inequalities,
~J.~Math.~Anal.~Appl.,~167(1992),\ 167-175.

\bibitem{Friedman80}
A.~Friedman,~S.~Kamin,~The asymptotic behavior of gas in an
n-dimensional porous medium,~Trans.~Amer.~Math.~Soc.,~262(1980),\
551-563.

\bibitem{Galaktionov00}
V.~A.~Galaktionov, L.~A.~Peletier and J.~L.~Vazquez,~Asymptotics of
fast-diffusion equation with critical exponent,~SIAM
J.~Math.~Anal.,~31(2000),\ 1157-1174.

\bibitem{Galaktionov91}
V.~A.~Galaktionov,~J.~L.~Vazquez,~symptotic behavior of nonlinear
parabolic equations with critical exponents. A dynamical system
approach,~J.~Funct.~Anal.,~100(1991),\ 435-462.

\bibitem{Galaktionov94a}
V.~A.~Galaktionov,~J.~L.~Vazquez,~Extinction for a quasilinear heat
equation with absorption I. Technique of intersection comparison,~
Comm.~Partial Differential Equations, 19(1994),\ 1075-1106.

\bibitem{Galaktionov94b}
V.~A.~Galaktionov,~J.~L.~Vazquez,~Extinction for a quasilinear heat
equation with absorption II. A dynamical system approach, Comm.
Partial Differential Equations,~19(1994),\ 1107-1137.

\bibitem{Gu94}
Y.~G.~Gu,~Necessary and sufficient conditions of extinction of
solution on parabolic equations,~Acta.~Math.~Sinica,~37(1994),\
73-79 (in Chinese).

\bibitem{Han11}
Y.~Z.~Han, W.~J.~Gao,~Extinction for a fast diffusion equation with a
nonlinear nonlocal source,~Arch.~Math.,~97(2011),\ 353-363.

\bibitem{Han13}
Y.~Z.~Han,~W.~J.~Gao,~Extinction and non-extinction for a polytropic filtration equation
with a nonlocal source,~Applicable Analysis,~92(2013),\ 636-650.

\bibitem{Herrero}
M.~A.~Herrero,~J.~J.~L.~Velazquez,~Approaching an extinction point
in one-dimensional semilinear heat equations with strong
absorptions,~J.~Math. Anal.~Appl.,~170(1992),\ 353-381.

\bibitem{Jin09}
C.~H.~Jin,~J.~X.~Yin,~Y.~Y.~Ke,~Critical extinction and blow-up
exponents for fast diffusive polytropic filtration equation with
sources,~ Proceedings of the Edinburgh Mathematical
Society,~52(2009),\ 419-444.

\bibitem{Kalashnikov74}
A.~S.~Kalashnikov,~The nature of the propagation of perturbations in
problems of non-linear heat conduction with absorption,~USSR Comp.
Math.~Math.~Phys.,~14(1974),\ 70-85.

\bibitem{Li05}
Y.~X.~Li,~J.~C.~Wu,~Extinction for fast diffusion equations with
nonlinear sources,~Electron J.~Differential Equations,~2005(2005),\
1-7.

\bibitem{Liu08}
W.~J.~Liu,~B.~Wu,~A note on extinction for fast diffusive p-Laplacian
with source, Math.~Methods~Appl.~Sci.,~31(12)(2008),\ 1383-1386.

\bibitem{Potter67}
M.~H.~Potter,~H.~F.~Weinberger,~Maximum Principle in
Differential Equations,~Englewood Cliffs,~NJ:~Prentice-Hall,~1967.

\bibitem{Sabinina62}
E.~S.~Sabinina,~On a class of nonlinear degenerate parabolic
equations. Dolk.~Akad.~Nauk SSSR,~143(1962),\ 794-797.

\bibitem{Tian08}
Y.~Tian,~C.~L.~Mu,~Extinction and non-extinction for a p-Laplacian
equation with nonlinear source,~Nonl.~Anal.,~69(2008),\
2422-2431.

\bibitem{Vazquez84}
J.~L.~Vazquez,~A strong maximum principle for some quasilinear elliptic equations,~Appl.~Math.~Optim.,~12(1984),\ 191-202.

\bibitem{Wu01}
Z.~Q.~Wu,~J.~N.~Zhao,~J.~X.~Yin,~H.~L.~Li,~Nonlinear diffusion equations,~World Scientific,~Singapore,~2001.

\bibitem{Yin09}
J.~X.~Yin,~J.~Li,~C.~H.~Jin,~Non-extinction and critical exponent
for a polytropic filtration equation,~Nonl.~Anal.,~71(2009),\
347-357.

\bibitem{Yin07}
J.~X.~Yin,~C.~H.~Jin,~Critical extinction and blow-up exponents for
fast diffusive p-Laplacian with
sources,~Math.~Method.~Appl.~Sci.,~30(10)(2007),\ 1147-1167.

\bibitem{Yuan05}
H.~J.~Yuan,~S.~Z.~Lian,~W.~J.~Gao,~X.~J.~Xu,~C.~L.~Cao,~Extinction
and positivity for the evolution p-Laplacian equation in
$R^N$,~Nonl.~Anal. TMA,~60(2005),\ 1085-1091.

\bibitem{Zhao93}
J.~N.~Zhao,~Existence and noexistence of solutions for $u_t=\mathrm{div}(|\nabla u|^{p-2}\nabla u)+f(\nabla u,u,x,t)$, J.~Math.~Anal.~Appl.,~172(1993),\ 130-146.
\end{thebibliography}
\end{document}